\newcommand{\Poi}{\mathrm{Poi}}
\newcommand{\IG}{\mathrm{IG}}
\newcommand{\dd}{\mathrm{d}}
\newcommand{\EE}{\mathbb{E}}
\newcommand{\Var}{\mathrm{Var}}
\newcommand{\eps}{\varepsilon}
\newcommand{\xx}{\mathbf{x}}
\newtheorem{definition}{Definition}[section]
\newtheorem{theorem}[definition]{Theorem}
\newtheorem{remark}[definition]{Remark}
\newtheorem{lemma}[definition]{Lemma}
\newtheorem{assumption}[definition]{Assumption}
\title{The variation of the posterior variance and Bayesian sample size determination}
\author{ Jörg Martin and Clemens Elster }
\affil{Physikalisch-Technische Bundesanstalt (PTB)}
\begin{document}
\maketitle

\begin{abstract}
We consider Bayesian sample size determination using a criterion that utilizes the first two moments of the expected posterior variance. We study the resulting sample size in dependence on the chosen prior and explore the success rate for bounding the posterior variance below a prescribed limit under the true sampling distribution. Compared with sample size determination based on the expected average of the posterior variance the proposed criterion leads to an increase in sample size and significantly improved success rates. Generic asymptotic properties are proven, such as an asymptotic expression for the sample size and a sort of phase transition. Our study is illustrated using two real world datasets with Poisson and normally distributed data. Based on our results some recommendations are given. 
\end{abstract}

\section{Introduction}%
\label{sec:introduction}
Sample size determination (SSD) is the attempt to estimate the data size that is needed in order to meet a certain criterion \cite{Desu2012}. This task is usually performed at a planning stage before any data is actually measured or recorded so that especially in the context of high financial or temporal expenses a careful SSD becomes indispensable. In the design, say,  of animal experiments or clinical trials SSD can even have an ethical dimension \cite{Charan2013, Dell2002}. 
In this article we study a Bayesian method for SSD that limits the expected fluctuations of the uncertainty of the result. By ``uncertainty'' we will here mean (the square root of) the posterior variance. 
For $n$ data points $\xx_n = (x_1,\ldots,x_n)$ drawn from a sampling distribution $p(\xx_n|\theta)$ with parameter $\theta$ the posterior distribution is defined by
\begin{align}
\label{eq:posterior}
\pi(\theta|\xx_n) \propto \pi(\theta)\cdot p(\xx_n|\theta)\,,
\end{align}
where $\pi(\theta)$ denotes the prior for the parameter $\theta$. The posterior variance is then given as 
\begin{align}
	u_n^2 := \Var_{\theta \sim \pi(\theta|\xx_n)}(\theta) \,.
	\label{eq:PostVar}
\end{align}
In practice, a scientist performing an experiment might desire to specify her/his result with an according uncertainty, say 
\begin{align*}
	\hat{\theta}\pm u_n\,,
\end{align*}
with $u_n$ being the square root of $u_n^2$ as defined in \eqref{eq:PostVar} and with $\hat{\theta}$ being the posterior mean. In order for this result to be precise enough the scientist might desire to fulfill a condition such as $u_n< \eps$ or, equivalently,
\begin{align}
\label{eq:SSD_Goal}
u_n^2 < \eps^2
\end{align}
for some small, positive $\eps$ that is chosen a priori. As the posterior distribution is dependent on the data $\xx_n$, so is $u_n^2$. Choosing an appropriate sample size $n$ so that \eqref{eq:SSD_Goal} is guaranteed \emph{before} $\xx_n$ is known is only possible for a few restricted scenarios, for instance Bernoulli distributed samples \cite{Turkkan1992, Joseph2019}. A more generally applicable criterion is to require instead of \eqref{eq:SSD_Goal} 
\begin{align}
	\overline{u_n^2} = \EE_{\xx_n\sim m(\xx_n)} [u_n^2] < \eps^2\,, 
	\label{eq:APVC}
\end{align}
where $m(\xx_n)=\int p(\xx_n|\theta) \pi(\theta) \dd \theta$ denotes the \emph{prior predictive}.
This is known as the \emph{average posterior variance criterion} (APVC) in the literature \cite{Wang2002, Turkkan1992, Santis2007}. 
For many standard cases explicit expressions for $\overline{u_n^2}$ can be derived. The usage of the prior predictive $m(\xx_n)$ is quite natural as it describes what is known about the data $\xx_n$ given our prior knowledge. 
We will denote the smallest $n$ such that the APVC \eqref{eq:APVC} is satisfied throughout this article by $\widetilde{n}_\eps$. 
In the literature many alternative criteria can be found that replace $u^2_n$ by some other, data dependent random variable $T(\xx_n)$, compare for instance \cite{Adcock1997, Lan2008, Santis2006, Wang2002, Rubin1998}. While we will stick in this article to the choice $T(\xx_n)=u^2_n$ many of the ideas presented here can, in principle, be translated to such approaches. \footnote{Section \ref{sec:asymptotics} of this article, for instance, relies on Assumption \ref{ass:bernstein}. Provided similar assumptions hold for $T(\xx_n)$ then all the proofs given there carry through.}

The APVC has a rather obvious drawback: it only guarantees \eqref{eq:SSD_Goal} to hold on average. Consequently this might lead to an uncertainty $u_{\widetilde{n}_\eps}$ that is well beyond $\eps$ for certain data samples $\xx_{\widetilde{n}_\eps}$, compare for instance Figure \ref{subfig:Football_acc_APVC} below. To get a better grasp on the variability of the uncertainty one can make more extensive usage of the prior predictive $m(\xx_n)$ \cite{Santis2006, Santis2007, Brutti2008, Gubbiotti2011, Sambucini2008}. This article aims at studying the behavior of such criteria and thereby to give some guidance or, at least, a deeper understanding for a SSD based on $m(\xx_n)$.  
In order to do so we will use a extension of the APVC \eqref{eq:APVC} which we will call the \emph{variation of the posterior variance criterion} (VPVC) and which takes the form
\begin{align}
	\label{eq:VPVC}
	\overline{u_n^2} + k\cdot \Delta u_n^2 < \eps^2\,.
\end{align}
where $k$ is a parameter to be chosen  and where
\begin{align}
	\label{eq:Deltau2}
	\Delta u_n^2 =  (\Var_{x\sim m(\xx_n)} (u_n^2))^{1/2} \,.
\end{align}

\begin{wrapfigure}{l}{0.5\textwidth}
	\includegraphics[width=0.48\textwidth]{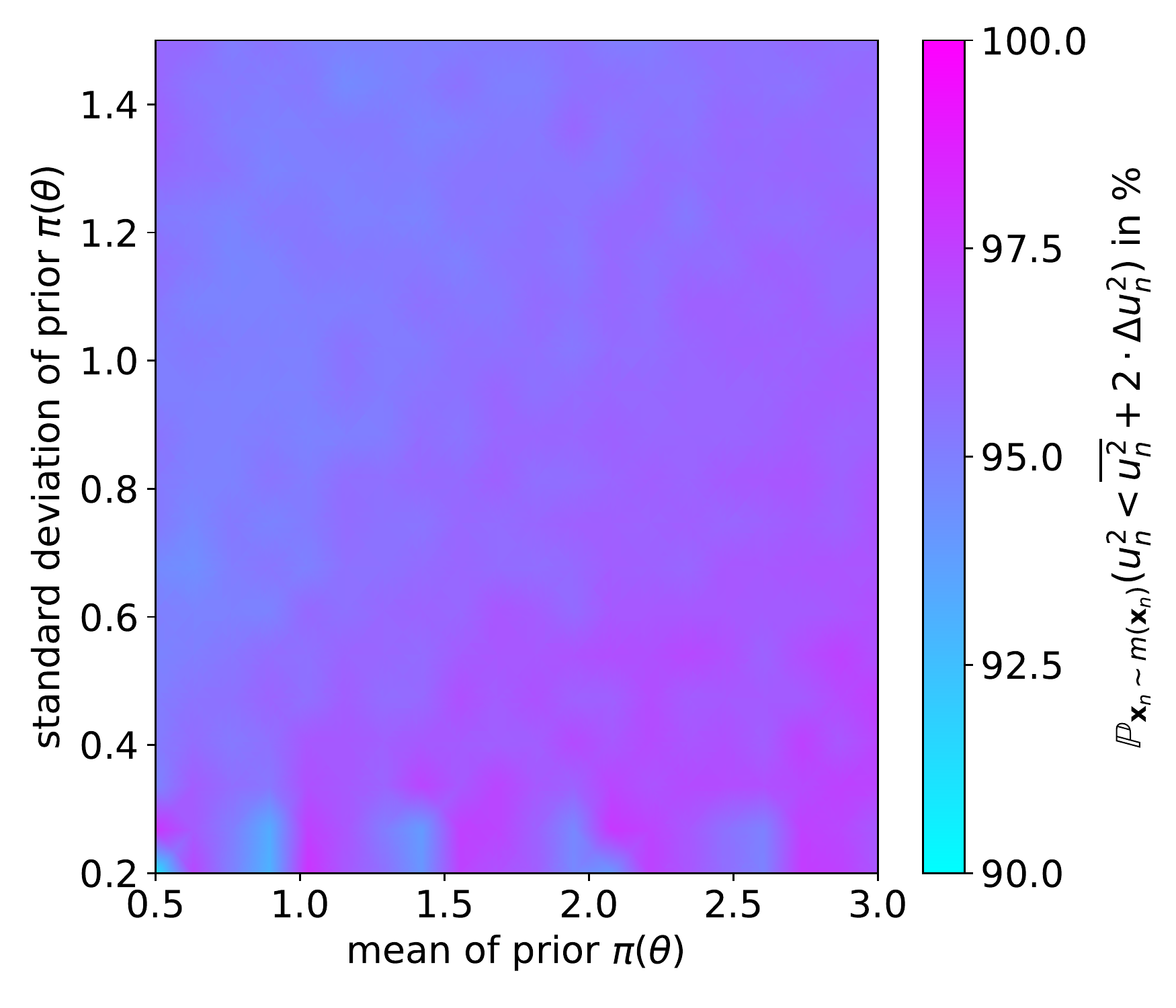}
	\caption{Probability for $u_n^2$ to be below $\overline{u_n^2}+2\cdot \Delta u_n^2$ when drawing $\xx_n \sim m(\xx_n)$. Setup is as in Section \ref{subsec:Poisson} below and sample sizes $n$ are as in Figure \ref{subfig:Football_n_VPVC}. Values range between 90.96-97.97\% with an average around 95.79\%.}
	\label{fig:ProbVPVC}
\end{wrapfigure}
Throughout this article we will denote the smallest $n$ such that \eqref{eq:VPVC} is satisfied by $n_\eps$. We will see in Section \ref{sec:SSD} below that taking \eqref{eq:VPVC} into account leads to substantially different sample sizes than the sole consideration of the APVC \eqref{eq:APVC} and to a better compliance with \eqref{eq:SSD_Goal}.  
We will often take $k=2$ in this article (loosely motivated from the normal distribution) but will show that for $\eps\rightarrow 0$ there is an optimal $k_\ast$. We will give some ideas on how to guess $k_\ast$ in practice.
Moreover, we will provide an asymptotic formula for the sample size $n_\eps$ in the small $\eps$ regime.  

The VPVC \eqref{eq:VPVC} is of course not as exhaustive in its description of the variability of $u_n^2$ as the full law of $u_n^2$ under $m(\xx_n)$. 
In Figure \ref{fig:ProbVPVC} we can see however that for $k=2$ and the setup from Section \ref{subsec:Poisson} the left hand side of \eqref{eq:VPVC} roughly covers 95.8\% of the law of $u_n^2$ under $m(\xx_n)$ and thus pretty accurately coincides with the intuition (for a normal random variable we would expect 97.7\%). 
On the other hand, this simplification is quite convenient for our purposes: it provides us with explicit expressions, spares us numerical issues and allows for a rather concise discussion of asymptotic properties in Section \ref{sec:asymptotics}. We will discuss our method for two common cases: for Poisson and normally distributed data and illustrate our discussion with real world datasets.

The authors are, to the best of their knowledge, not aware of work in the literature that considers a criterion in the exact same shape of \eqref{eq:VPVC}.
In \cite{Turkkan1992} Pham-Gia and Turkkan consider an object such as $\Delta u_n^2$ from \eqref{eq:Deltau2} for a Binomial distribution but apply it in a different manner. Our approach is inspired from the quite common idea of using $m(\xx_n)$ for studying $u_n^2$. We believe that the discussion in this article deepens the understanding of SSD methods build on $m(\xx_n)$ in general.

The article is organized as follows: Section \ref{sec:SSD} discusses the application of the VPVC to Poisson and normally distributed data. We will compare our results to the ones of the simpler AVPC method \eqref{eq:APVC}, visualize the effect of using a prior that is inconsistent with the underlying parameter and debate how a conservative SSD could be performed. For this purpose we will use actual datasets, namely the goals from international football matches in the years 2015-2019 \cite{Football} and the length of pop songs from the \emph{Million song dataset} \cite{MillionSongs}. In Section \ref{sec:asymptotics} we will look at the behavior of the VPVC for $\eps\rightarrow 0$. The results from that section, especially Theorem \ref{thm:conservative}, indicate that SSD methods based on $m(\xx_n)$ exhibit some sort of phase transition in limit. We will show that a $k_\ast$ exists such that for $k>k_\ast$ a SSD based on the VPVC \eqref{eq:VPVC} will ensure that \eqref{eq:SSD_Goal} will \emph{asymptotically be true with probability 1}. As $k_\ast$ will depend on the (usually unknown) true value of $\theta$, we will discuss a method how to get an upper bound based on the prior knowledge. Moreover, in Lemma \ref{lem:n_eps} we will provide a generic, asymptotic formula for the sample $n_\eps$ predicted the VPVC.

\section{SSD based on the variation of the posterior variance}
\label{sec:SSD}

In this section we study the effects of using the VPVC as proposed in \eqref{eq:VPVC} and discuss its dependency on the prior knowledge and the true parameter.
The discussion is first carried out for the case of a single parameter and Poisson distributed data. In Section \ref{subsec:normal} we then consider an example that involves nuisance parameters and normally distributed data.

\vspace{1.5cm}
\subsection{Single Parameter: Poisson distributed data}%
\label{subsec:Poisson}
\begin{wrapfigure}{l}{0.5\textwidth}
	\includegraphics[width=0.48\textwidth]{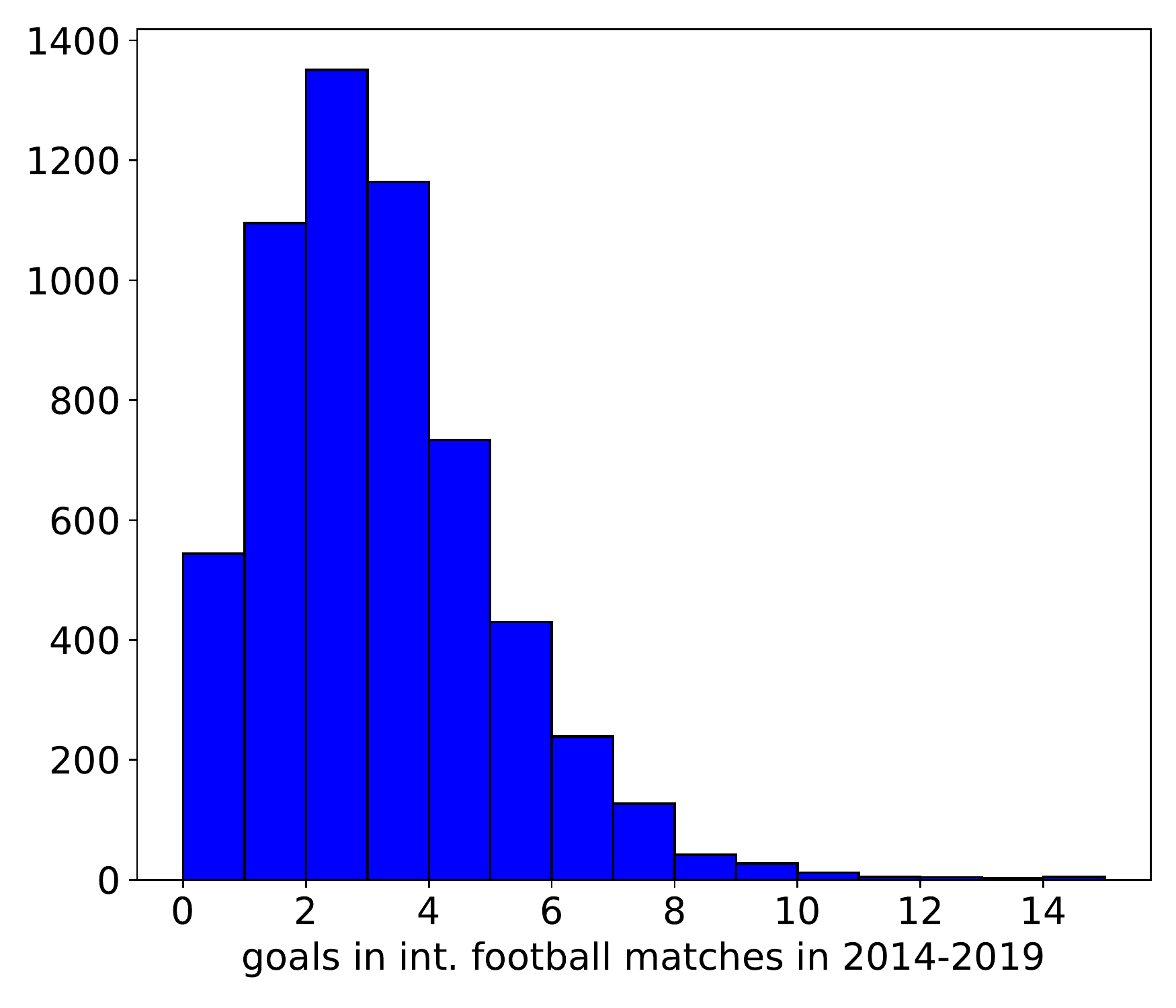}
	\caption{Scored goals in international football games between 2014 and 2019, taken from \cite{Football}.}
	\label{fig:goals}
\end{wrapfigure}%
The histogram in Figure \ref{fig:goals} shows the number of goals scored in 5784 international football matches that were played between 2014 and 2019, taken form \cite{Football}. 
Suppose we want to get an estimate of the \emph{average number of goals} $\theta$ that are scored in a game (from both teams). Of course, if we have the full dataset depicted in Figure \ref{fig:goals} we can simple take the sample mean, which yields in fact an average of $2.71$ goals per game in the mentioned time range. 

If we are \emph{not} in possession of the full dataset the question arises how many samples, that is football matches in this case, do we need before we can make a ``decent'' guess about $\theta$. To formalize this question let us suppose that the data for $n$ games follows the product of $n$ Poisson distributions with  the (unknown) parameter of interest $\theta$: 
\footnote{This is an approximation. The ratio of variance and mean is around 1.3 for the full dataset, however to keep the setup simple we will stick to the Poisson assumption and treat the mean, i.e. the MLE, of 2.71 as the true parameter.}
\begin{align}
\label{eq:PoissonSamplingDistribution}
p(\xx_n|\theta)=\prod_{i=1}^n \mathrm{Poi}(x_i|\theta) = \prod_{i=1}^n \frac{\theta^{x_i}}{x_i! } e^{-\theta} \,,
\end{align}
where $\xx_n=(x_1,\ldots,x_n)$  and each $x_i$ for $i=1,\ldots,n$ should be read as the total number of goals in game $i$. 
How large should we choose $n$ to get an estimate for $\theta$? A somewhat natural idea is to require that $n$ should be large enough so that the uncertainty $u_n$ we can specify for the result is smaller than an $\eps$, say $\eps=0.3$. Recall however that we want to find $n$ before we measure any data, that is, in our example, before we know any scores. Usually, $u_n$ will depend on the available data, so how can we find  $n$ before we know $\xx_n$?

\begin{figure}[t]
\centering
	\begin{subfigure}[t]{0.49\textwidth}
		\centering
		\includegraphics[width=\textwidth]{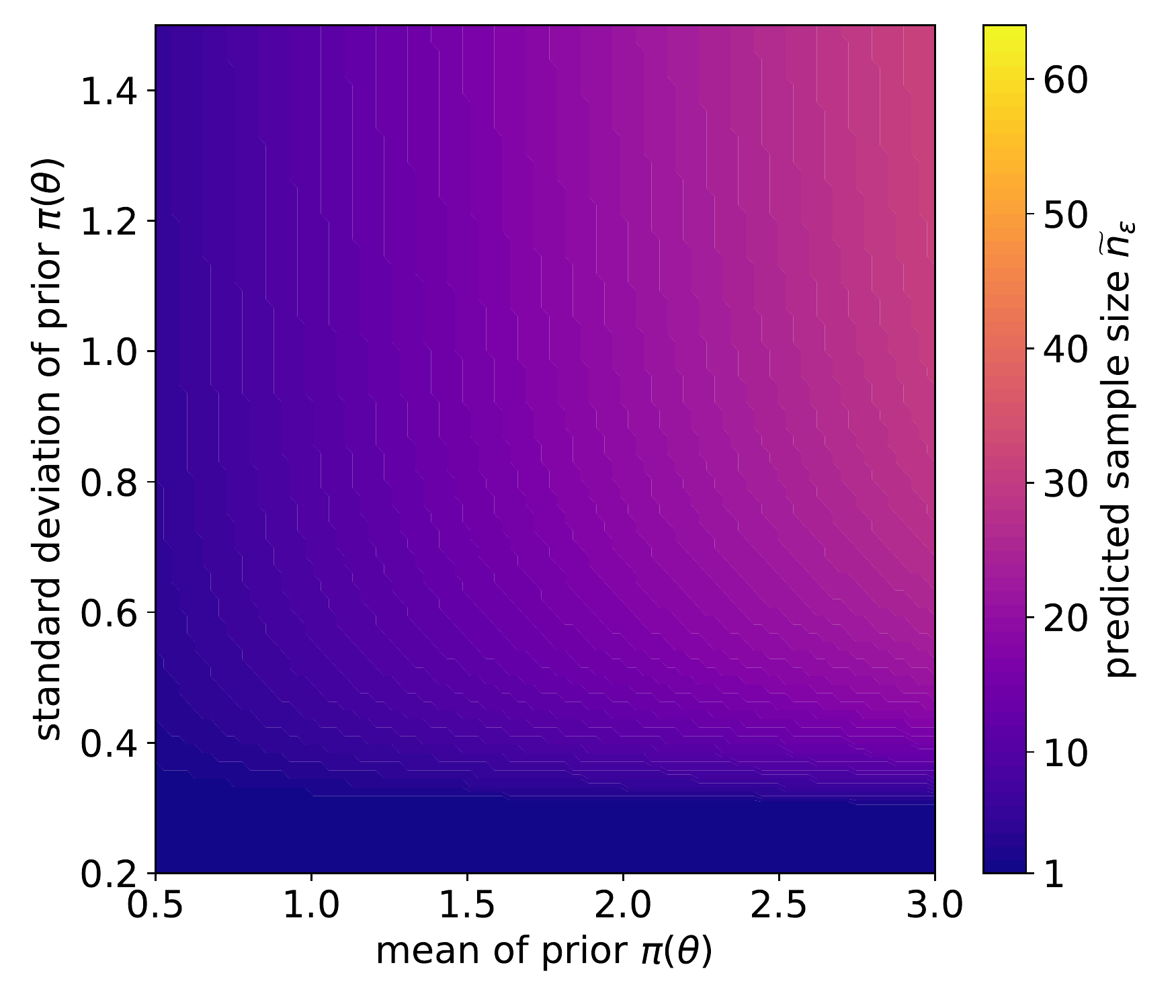}
		\caption{APVC: sample sizes $\widetilde{n}_\eps$}
		\label{subfig:Football_n_APVC}
	\end{subfigure}%
~
	\begin{subfigure}[t]{0.49\textwidth}
		\centering
		\includegraphics[width=\textwidth]{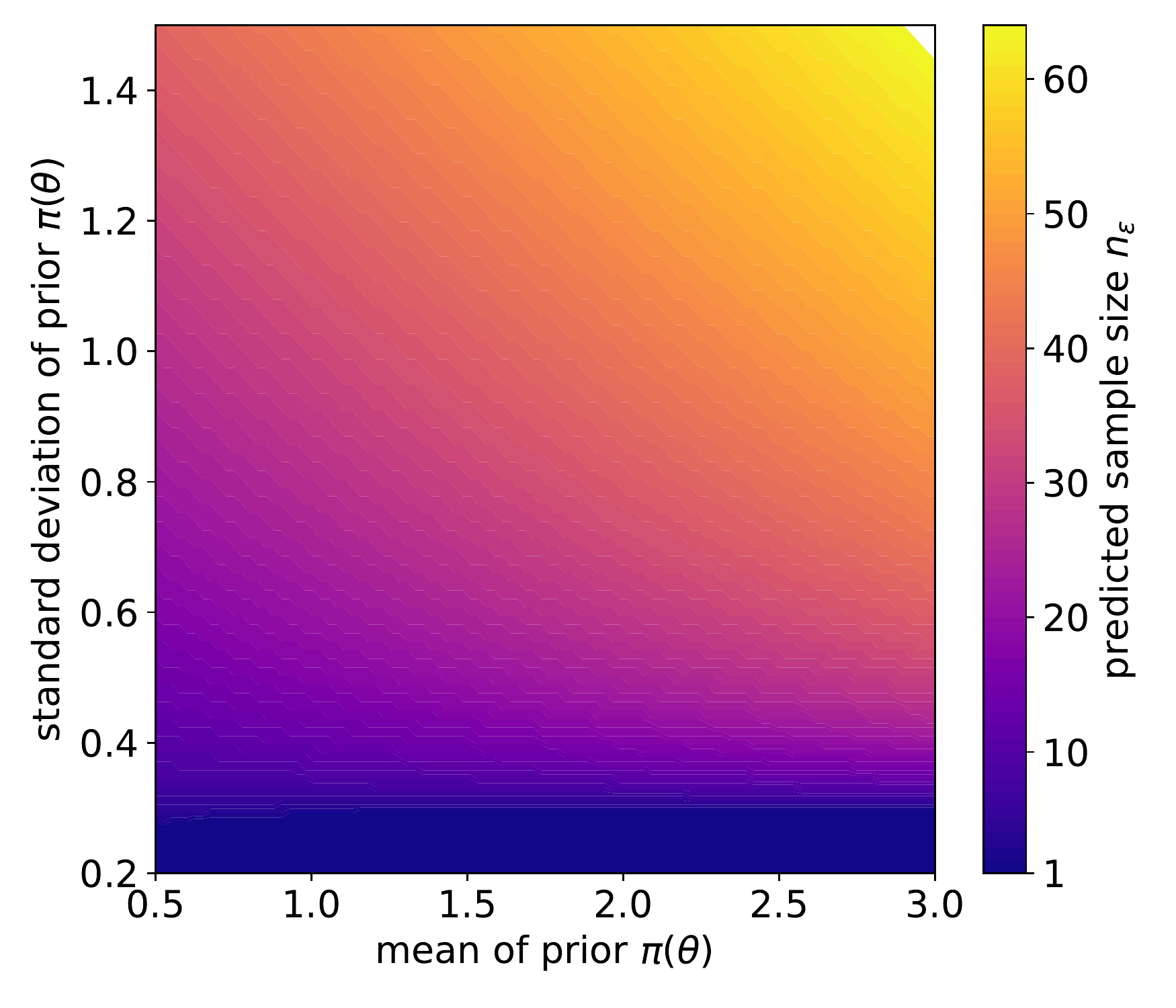}
		\caption{VPVC: sample sizes $n_\eps$}
		\label{subfig:Football_n_VPVC}
	\end{subfigure}
	\caption{\emph{left}: sample sizes predicted for $\Poi(\xx_n|\theta)$ distributed data and various priors $\pi(\theta)=\mathrm{Gamma}(\theta|\alpha,\beta)$ for $\eps=0.3$ by the APVC \eqref{eq:FootballAPVC}. \emph{right}: sample sizes $n_\eps$ predicted by the VPVC \eqref{eq:FootballVPVC} for the same setup.}
\end{figure}

At this point Bayesian statistics comes in handy as it allows us to use prior knowledge for this purpose, condensed in a prior distribution $\pi(\theta)$. We will here use a gamma distribution $\pi(\theta)=\mathrm{Gamma}(\theta|\alpha,\beta)$ with shape parameter $\alpha$ and rate $\beta$. We can then compute the prior predictive $m(\xx_n)$ as in \eqref{eq:APVC} which gives us a distribution for the data $\xx_n$ given our prior knowledge. As uncertainty $u_n$ for our result we take the square root of the posterior variance $u_n^2=\frac{\alpha+\sum_{i=1}^n x_i}{(n+\beta)^2}$. The inequality $u_n^2 < \eps^2$ has no solution for $n$ that holds for any $\xx_n$. However, we can require this inequality to hold on average over the available information $m(\xx_n)$ about $\xx_n$. In other words: choose the smallest $n\geq 1$ such that
\begin{align}
	\label{eq:FootballAPVC}
	\overline{u_n^2}=\EE_{\xx_n\sim m(\xx_n)}[u_n^2] = \frac{\alpha}{\beta} \cdot \frac{1}{n+\beta} < \eps^2 \,.
\end{align}
We referred to this as the APVC in the introduction and denoted the corresponding $n$ by $\widetilde{n}_\eps$. The result of such an SSD for $\eps=0.3$ is shown in Figure \ref{subfig:Football_n_APVC}: for various values of the prior mean $\EE_{\theta\sim\pi(\theta)}[\theta]= \frac{\alpha}{\beta} $ and its standard deviation $( (\Var_{\theta \sim \pi(\theta)}(\theta) ))^{1/2}= \frac{\alpha^{1/2}}{\beta} $ we plotted the sample size $\widetilde{n}_\eps$ predicted by \eqref{eq:FootballAPVC}. Naturally, as the mean of $\pi(\theta)$ increases the sample size $\widetilde{n}_\eps$ increases as well since $\theta$ is linked to the variance of the Poisson distributed data. The standard deviation of $\pi(\theta)$ seems to have only a minor influence on the sample size with one exception: below a certain threshold the prior variance pushes the posterior variance into the right ballpark so that even a minimal sample size of $\widetilde{n}_\eps=1$ is enough to fulfill \eqref{eq:FootballAPVC}. 

Figure \ref{subfig:Football_acc_APVC} illustrates for which prior choices the SSD based on the APVC is successful based on the football goal dataset we depicted in Figure \ref{fig:goals}: for each prior mean and standard deviation and the corresponding sample sizes $\widetilde{n}_\eps$ from Figure \ref{subfig:Football_n_APVC} 1000 random samples of size $\widetilde{n}_\eps$ were drawn from the football dataset and the corresponding $u_{\widetilde{n}_\eps}^2$ were computed. 
Figure \ref{subfig:Football_acc_APVC} shows the fraction of $u_{\widetilde{n}_\eps}^2$ that is actually below $\eps^2$.
For comparison the ``true'' value (the average goal number in the full dataset shown in Figure \ref{fig:goals}) is depicted by the dashed line.
The result is rather disappointing. Only beyond the true $\theta$ of $2.71$ the quota rises above $60\%$.  The reason for this unpleasant effect is quite apparent: by construction, the APVC only requires $u_{\widetilde{n}_\eps}^2$ to be small enough \emph{on average}.  
\begin{figure}[t]
\centering
	\begin{subfigure}[t]{0.49\textwidth}
		\centering
		\includegraphics[width=\textwidth]{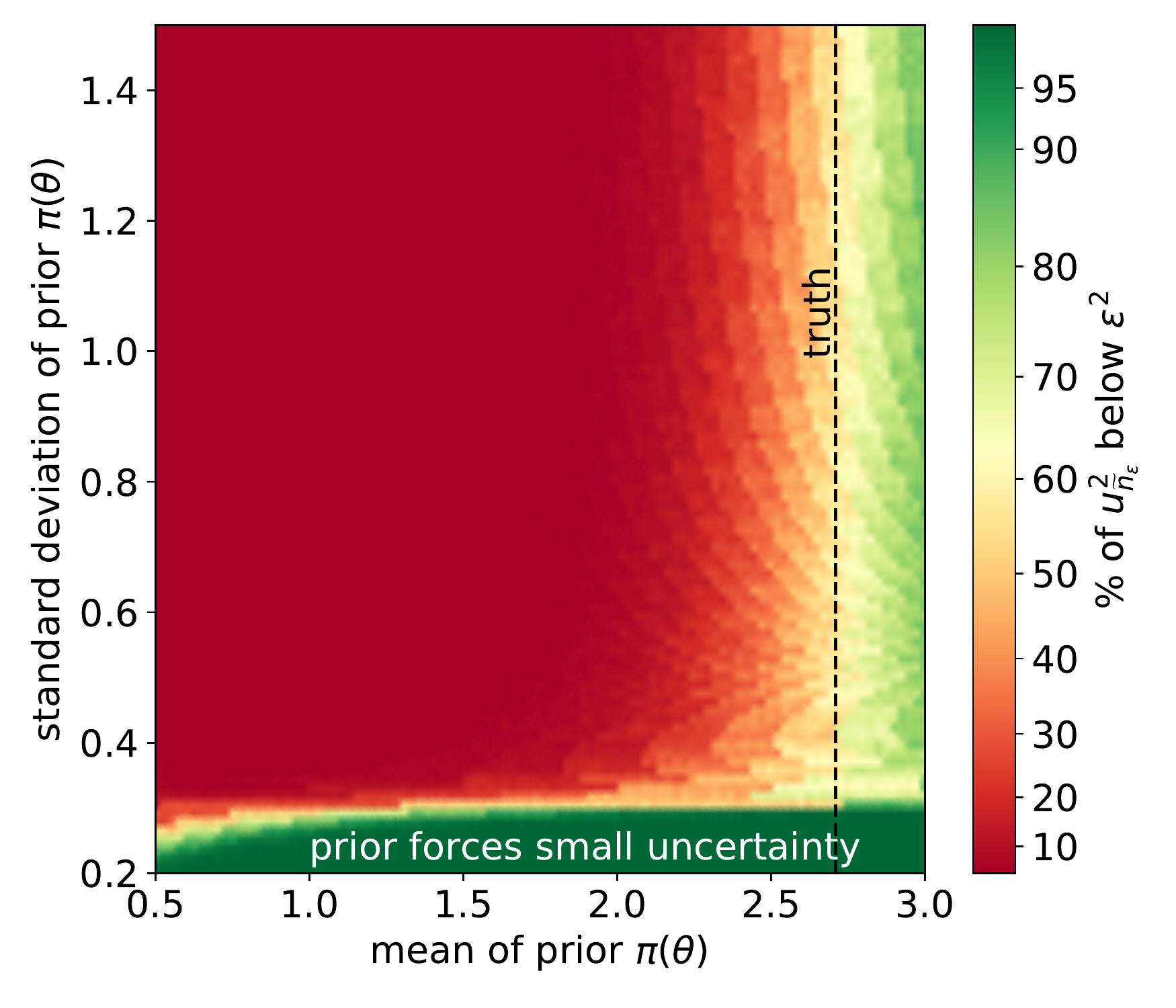}
		\caption{APVC: fraction of $u_{\widetilde{n}_\eps}^2$ below $\eps^2$}
		\label{subfig:Football_acc_APVC}
	\end{subfigure}%
~
	\begin{subfigure}[t]{0.49\textwidth}
	\centering
	\includegraphics[width=\textwidth]{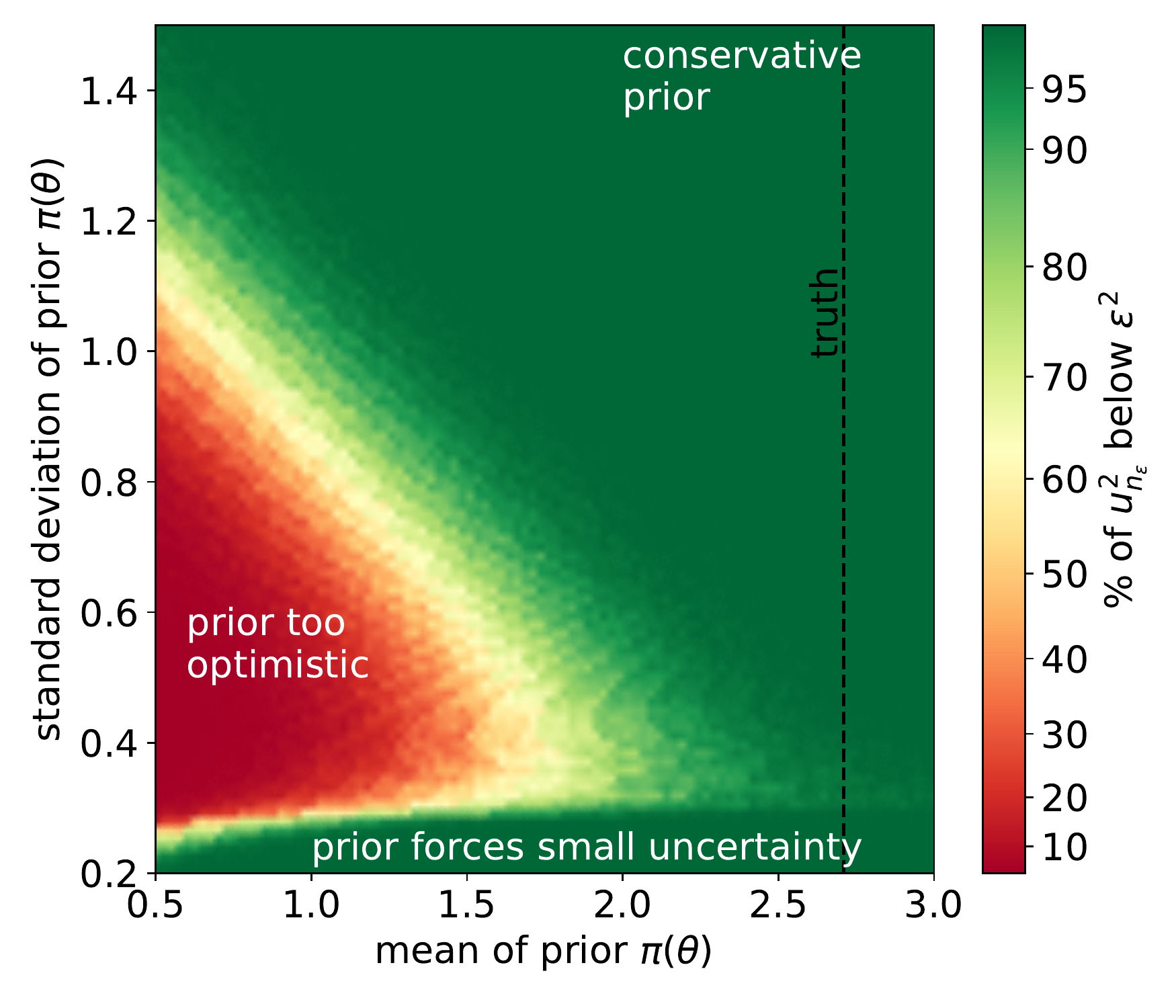}
	\caption{VPVC: fraction of $u_{n_\eps}^2$ below $\eps^2$}
		\label{subfig:Football_acc_VPVC}
	\end{subfigure}
	\caption{ \emph{left}: fraction of $u_{\widetilde{n}_\eps}^2$ below $\eps^2$ for the football goal data from Figure \ref{fig:goals} using the sample sizes $\widetilde{n}_\eps$ predicted by the APVC (Fig. \ref{subfig:Football_n_APVC}).  
\emph{right}: Fraction of $u_{n_\eps}^2$ below $\eps^2$ for the sample sizes $n_\eps$ predicted by the VPVC (Fig. \ref{subfig:Football_n_VPVC}) }
\end{figure}
As we pointed out in the introduction we therefore here use a refined criterion, which we called the VPVC and which takes the form
\begin{align}
	\label{eq:FootballVPVC}
	\overline{u_n^2}  + k\cdot \Delta u_n^2< \eps^2 \,,
\end{align}
where $\Delta u_n^2=(\Var_{\xx_n \sim m(\xx_n)}(u_n^2))^{1/2}=\frac{\alpha^{1/2}}{\beta}  \cdot \frac{n^{1/2}}{(n+\beta)^{3/2}}$ and where we choose $k=2$. The result of the VPVC is depicted in Figure \ref{subfig:Football_n_VPVC}. There are two clear differences compared to Figure \ref{subfig:Football_n_APVC}: first, the sample sizes $n_\eps$ are higher than the numbers $\widetilde{n}_\eps$ we obtained from the APVC, which was expected as we added an additional positive term $k\,\Delta u_n^2$ to the left hand side of the criterion. Second, and perhaps more important, the result of the SSD substantially increases once we increase the standard deviation of the prior. This allows us to make our sample choice more conservative for a given prior mean by increasing its standard deviation i.e our prior uncertainty about $\theta$, which is quite natural. The most conservative SSD is thus located in the right upper corner of the plot. This phenomenon for parameters that describe the variance of the data will appear again in the next subsection.

The success of the refined criterion is depicted in Figure \ref{subfig:Football_acc_VPVC}. The percentage of $u_{n_\eps}^2$ below $\eps^2$ reaches now much higher values, at many positions beyond $95\%$. This success is also more robust against deviations from the true value. Even for a mean of the prior $\pi(\theta)$ which is well below the true $\theta$ a high enough standard deviation of $\pi(\theta)$ will allow to reach a quota of $95\%$.

Figure \ref{subfig:Football_acc_VPVC} therefore splits in three areas. As in Figure \ref{subfig:Football_acc_APVC} there is a bottom part where the posterior variance is dominated by the small prior uncertainty so that $u_{n_\eps}^2 <\eps^2$ is easily satisfied even for the minimal sample size of $n_\eps=1$. As the standard deviation of $\pi(\theta)$ gets larger a too small value of the prior mean will result in a low quota: the prior was chosen too optimistic. Increasing either the mean of $\pi(\theta)$ or its standard deviation will lead however to a more conservative SSD and to a better compliance of $u_{n_\eps}^2<\eps^2$. The highest percentage can therefore be found in the upper right corner of Figure \ref{subfig:Football_acc_VPVC} and in the bottom area. 

\subsection{Nuisance parameters: normally distributed data}
\label{subsec:normal}
In this section we will consider a sample size determination for the identification of the ``typical'' song length $\mu$ of a pop song. For this purpose we will use the \emph{Million Song Dataset} from \cite{MillionSongs} that contains metadata for a million pop songs, collected in 2011. Averaging over the length of all songs in the dataset reveals that $\mu$ is around 4.17 minutes. As before, our goal is however \emph{not} to find an outmost precise value of $\mu$ but instead to identify a minimal number of songs $n$ from which we can estimate $\mu$ up to a pre-specified precision $\eps$. 
For the values of $\eps$ used in this article the sample size $n$ will turn out to be much smaller number than a million. This allows us, once more, to use the full dataset to judge the effectiveness of our SSD.

For the sake of simplicity we will make the assumption that song length is a property that's normally distributed, thereby ignoring for instance possible skewness issues that arise from the fact that a song cannot have a negative length. For $n$ songs we assume that their lengths $\xx_n=(x_1,\ldots, x_n)$ are normally distributed, i.e.
$p(\xx_n|\mu, \sigma^2) = \prod_{i=1}^n \mathcal{N}(x_i|\mu,\sigma^2)$ with mean $\mu$ and standard deviation $\sigma^2$. Both parameters $\mu$ and $\sigma^2$ are unknown. While $\mu$ is the parameter of interest, $\sigma^2$ will be considered as a nuisance parameter. We take the normal-inverse-gamma prior \cite{Fink1997}
\begin{align}
	\label{eq:musicprior}
	\pi(\mu,\sigma^2) = \mathcal{N}(\mu|\mu_0, \lambda\sigma^2) \cdot \IG(\sigma^2|\alpha,\beta) \,,
\end{align}
where $\IG$ denotes the inverse-gamma distribution and where  
$\alpha>2,\lambda,\beta>0$ and $\mu_0$ are hyperparameters.
The squared uncertainty $u_n^2$ for $\mu$ is given by the variance of the marginal posterior $\pi(\mu|\xx_n)$:
\begin{align}
	u_n^2 = \Var_{\mu \sim \pi(\mu|\xx_n)}(\mu)  = \frac{2\beta + \sum_{i=1}^n (x_i -\overline{x})^2 + \frac{n}{\lambda n_\lambda}(\overline{x}-\mu_0)^2}{n_\lambda (n + 2\alpha -2)}\,.
\end{align}
where $n_\lambda=n+\lambda^{-1}$ and $\overline{x}=\frac{1}{n}\sum_{i=1}^n x_i$.%
\begin{figure}[t]
\centering
	\begin{subfigure}[t]{0.49\textwidth}
		\centering
		\includegraphics[width=\textwidth]{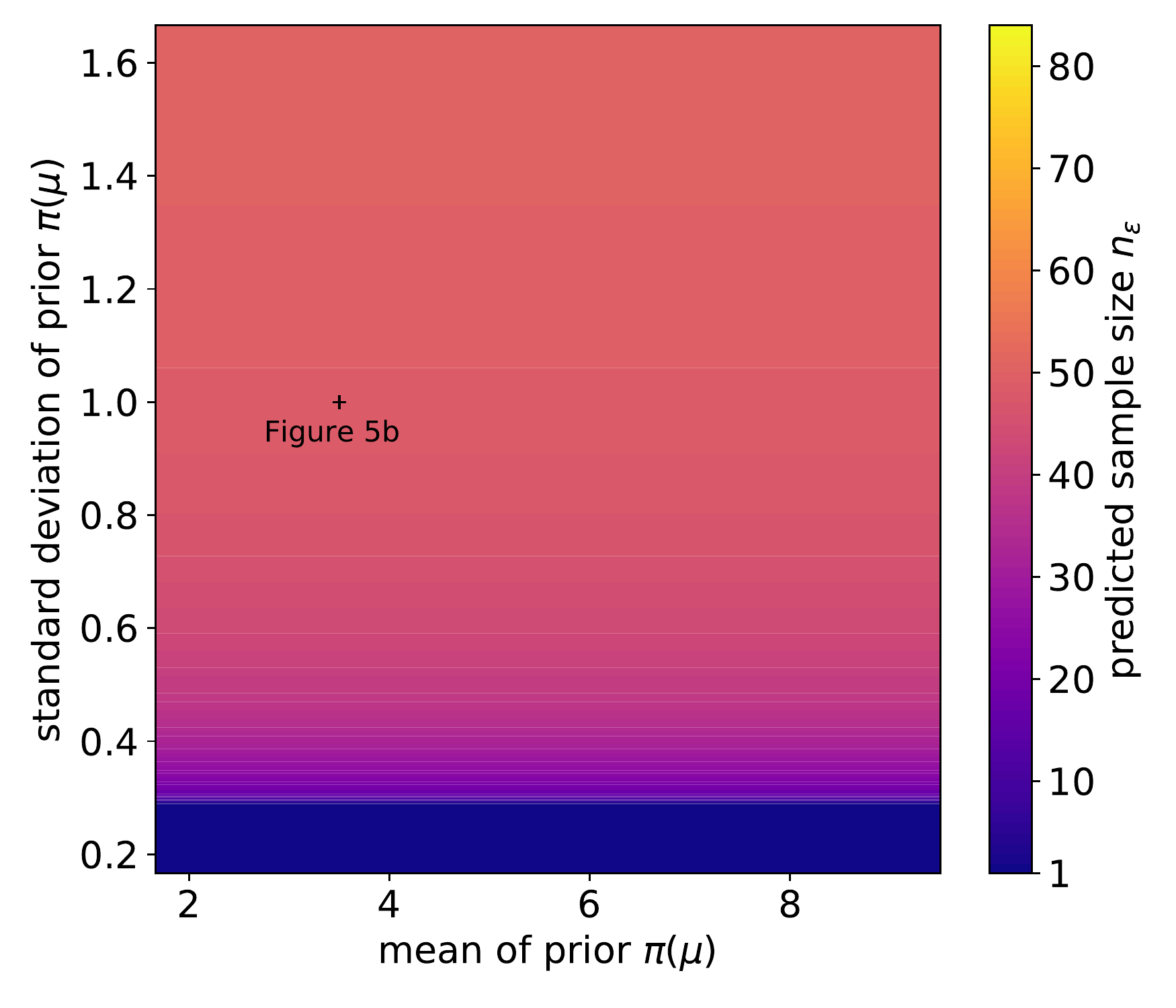}
		\caption{VPVC: SSD for varying $\pi(\mu)$}
		\label{subfig:Music_n_mu}
	\end{subfigure}%
~
	\begin{subfigure}[t]{0.49\textwidth}
		\centering
		\includegraphics[width=\textwidth]{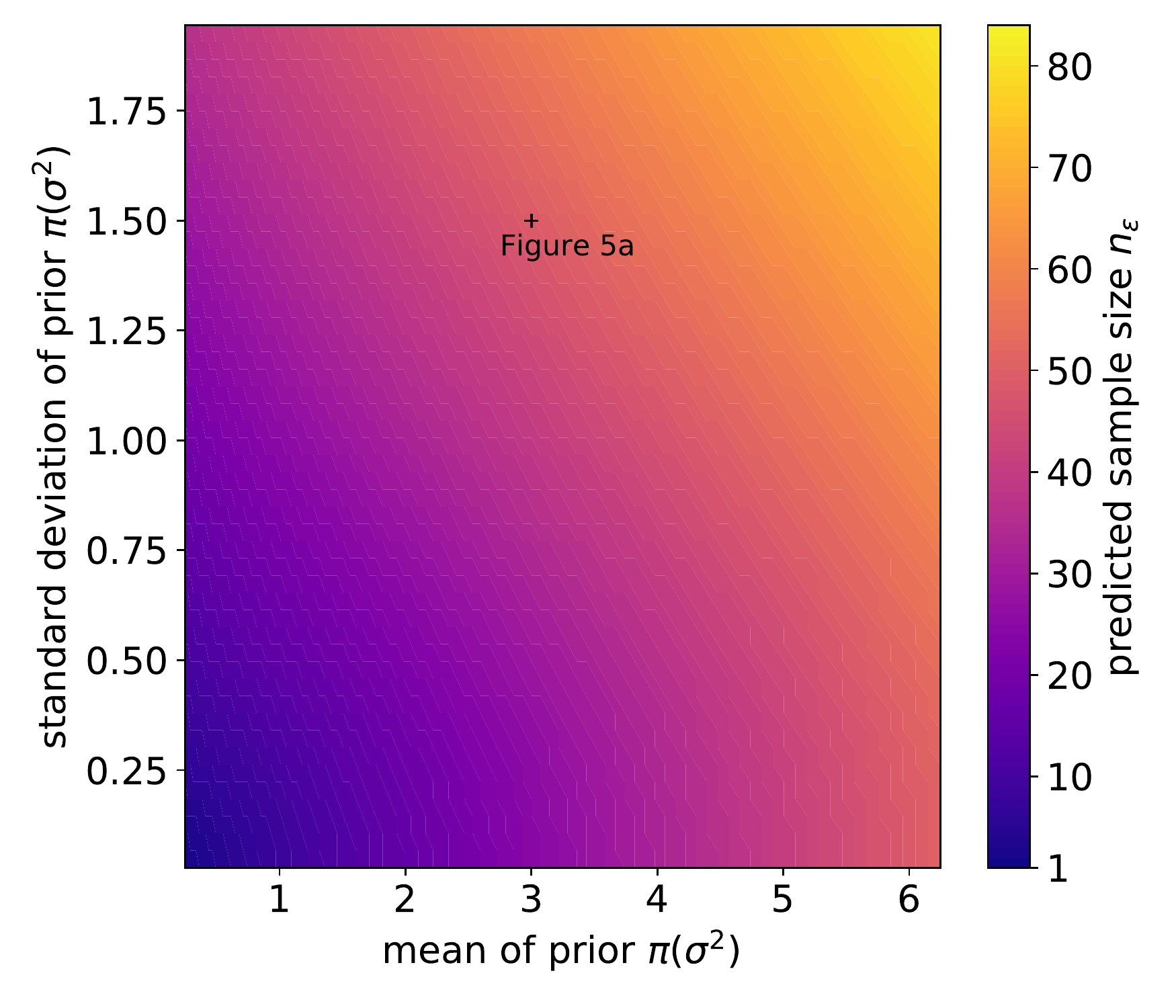}
		\caption{VPVC: SSD for varying $\pi(\sigma^2)$}
		\label{subfig:Music_n_s2}
	\end{subfigure}
	\caption{Sample sizes predicted by the VPVC for different $\pi(\mu)$ and $\pi(\sigma^2)$. \emph{left}: $\pi(\sigma^2)$ is fixed as indicated by the black cross in Figure \ref{subfig:Music_n_s2} and the sample sizes $n$ is depicted for various $\pi(\mu)$. \emph{right}: Sample sizes for various $\pi(\sigma^2)$ and a fixed $\pi(\mu)$ marked by the black cross in Figure \ref{subfig:Music_n_mu}. }
	\label{fig:Music_n}
\end{figure}
We will once more consider the VPVC, that is we choose the smallest $n=n_\eps$ such that
\begin{align}
	\overline{u_n^2} + k \cdot\Delta   u_n^2 < \eps^2 \,,
\end{align}
where $\overline{u_n^2}=\EE_{\xx_n \sim m(\xx_n)}[u_n^2]=\frac{\beta}{n_\lambda\,(\alpha-1)}$ and $\Delta u_n^2 = (\Var_{\xx_n \sim m(\xx_n)}(u_n^2))^{1/2}=\allowbreak \frac{\beta}{n_\lambda(\alpha-1)(\alpha-2)^{1/2}} \big(\frac{n}{n+2\alpha-2}\big)^{1/2}$ and where we choose again $k=2$. The sample sizes $n_\eps$ for various choices of the prior $\pi(\mu,\,\sigma^2)$ and an $\eps$ of $20\,\mathrm{sec} = 0.33 \,  \mathrm{min}$ are depicted in Figure \ref{fig:Music_n}. To visualize the impact of the prior knowledge we varied one of the marginals $\pi(\mu)$ and $\pi(\sigma^2)$ while keeping the other one fixed.  
In Figure \ref{subfig:Music_n_mu} we varied the marginal prior $\pi(\mu)$, while fixing the marginal $\pi(\sigma^2)$ to have a mean of $3.0 \, \mathrm{min}^2$ and a standard deviation of $1.5\,\mathrm{min}^2$, for comparison this $\pi(\sigma^2)$ was marked by the black cross in Figure \ref{subfig:Music_n_s2}. For Figure \ref{subfig:Music_n_s2} we fixed $\pi(\mu)$ to have a mean $3.5 \, \mathrm{min}$ and a standard deviation of $1.0 \,\mathrm{min}$ (marked by the black cross in Figure \ref{subfig:Music_n_mu}) and varied $\pi(\sigma^2)$. In particular, the two crosses in Figure \ref{subfig:Music_n_mu} and \ref{subfig:Music_n_s2} both mark positions with an equal sample size of $n=49$.
The APVC criterion from \eqref{eq:APVC} and the average coverage criterion from \cite{Adcock1988, Joseph1995} both yield a sample size of 24 for this prior.
Note, that the effect of the marginal $\pi(\mu)$ is rather minimal, once $\pi(\sigma^2)$ is kept fixed. In fact, the VPVC turns out to be independent of the hyperparameter $\mu_0$ - compare the vertical symmetry in Figure \ref{subfig:Music_n_mu}. Varying the standard deviation of $\pi(\mu)$ while keeping $\pi(\sigma^2)$ fixed will only affect the hyperparameter $\lambda$ which has only a minor influence on the SSD result. The only exception to this is the bottom area of Figure \ref{subfig:Music_n_mu} where the small standard deviation of $\pi(\mu)$ forces the posterior variance to be small and thus predicts a small sample size of 1, which is similar to a phenomenon we observed in Section \ref{subsec:Poisson}. Figure \ref{subfig:Music_n_s2} shows that the variance parameter $\sigma^2$ has a similar influence on the sample size as the Poisson parameter in Section \ref{subsec:Poisson}: increasing either the mean or the standard deviation of $\pi(\sigma^2)$ will increase the sample size so that the most conservative experimental design is located in the upper right corner. 
This behavior can again be expected since increasing the mean of $\pi(\sigma^2)$ one will expect data that has larger variability and contains less information.
In contrast to Figure \ref{subfig:Football_n_VPVC} and Figure \ref{subfig:Music_n_mu} there is no distinct bottom area of minimal sample sizes.  

\begin{figure}[t]
\centering
	\begin{subfigure}[t]{0.49\textwidth}
		\centering
		\includegraphics[width=\textwidth]{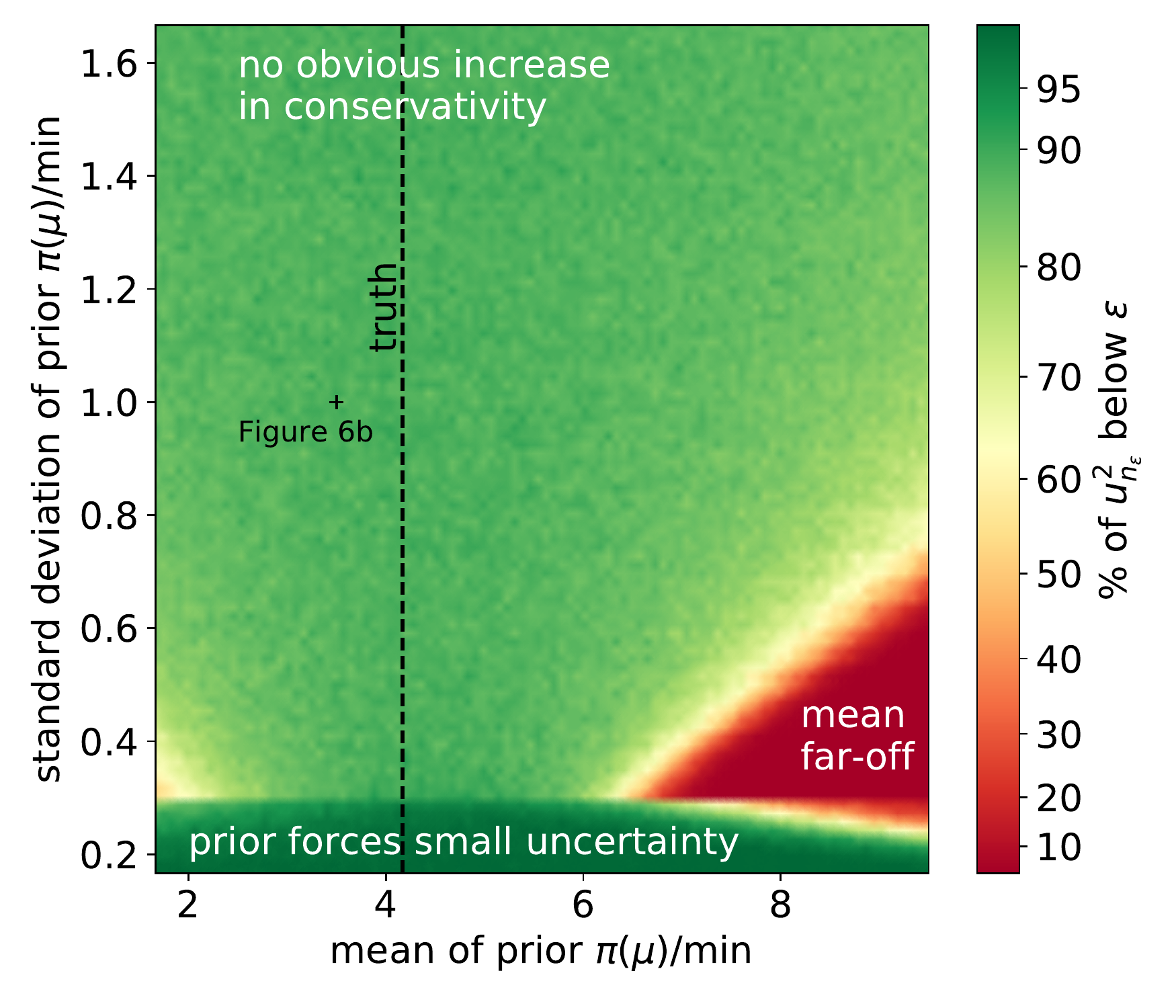}
		\caption{VPVC: SSD for varying $\pi(\mu)$}
		\label{subfig:Music_acc_mu}
	\end{subfigure}%
~
	\begin{subfigure}[t]{0.49\textwidth}
		\centering
		\includegraphics[width=\textwidth]{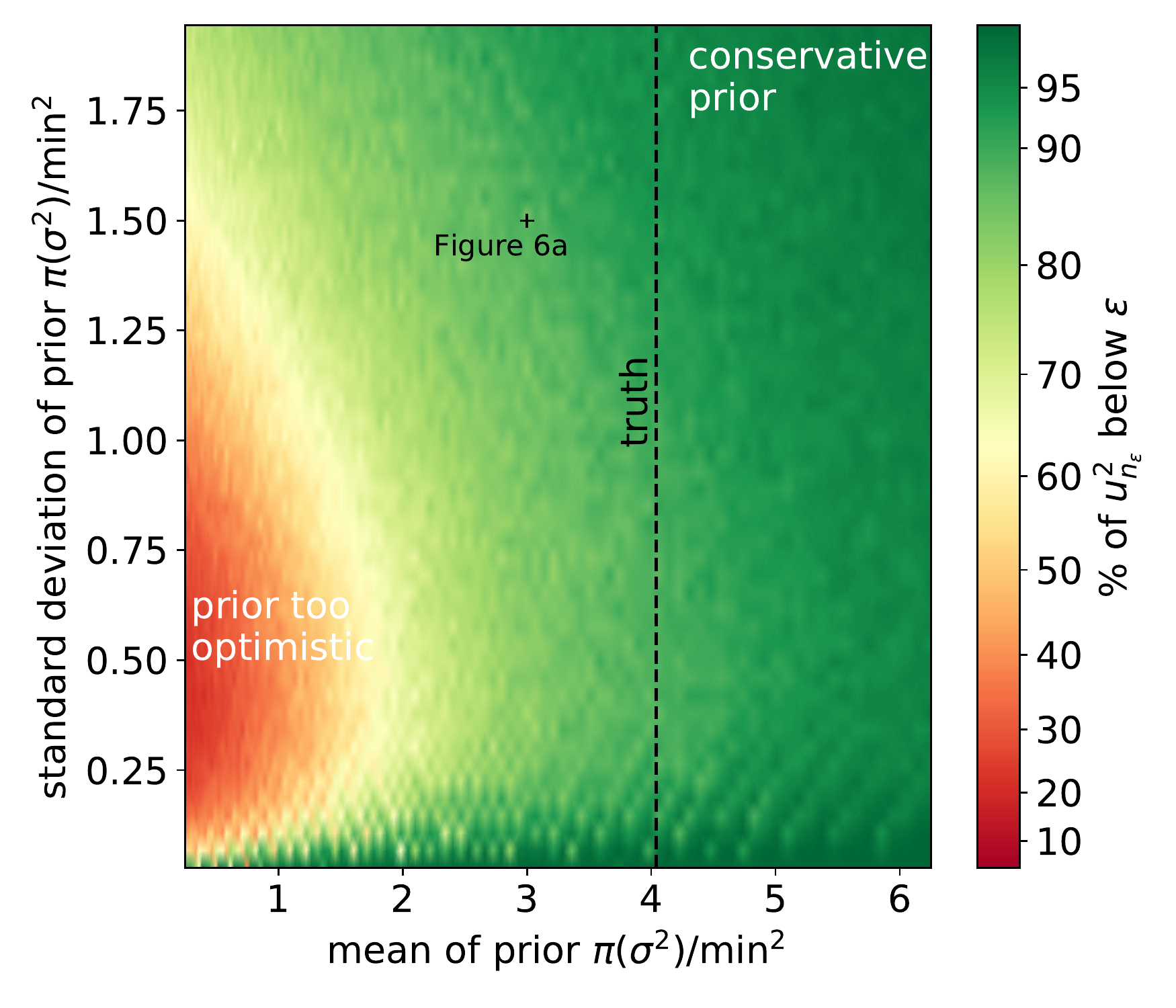}
		\caption{VPVC: SSD for varying $\pi(\sigma^2)$}
		\label{subfig:Music_acc_s2}
	\end{subfigure}
	\caption{Percentage of $u_{n_\eps}^2 < \eps^2$ for the sample sizes $n_\eps$ from Figure \ref{fig:Music_n} within the \emph{Million song dataset} \cite{MillionSongs}. Axes of both plots and the positions of the black crosses are the same as in Figure \ref{fig:Music_n}. The ``true'' value within the full dataset is depicted by the dashed lines.}
	\label{fig:Music_acc}
\end{figure}

As in Section \ref{subsec:Poisson} we can use the full dataset to judge the success of our sample size planning. 
Figure \ref{fig:Music_acc} displays, for the same priors as in Figure \ref{fig:Music_n} the proportion of $u_{n_\eps}^2$ below $\eps^2$ for $u_{n_\eps}^2$ computed from random samples of size $n_\eps$, drawn from the Million Song Dataset. The left plot, Figure \ref{subfig:Music_acc_mu},  reveals that the choice of the marginal $\pi(\mu)$ has mostly an impact on the success of the SSD if its mean is far-off from the true parameter. Moreover, a very precise prior knowledge about $\mu$, will result in a high proportion of $u_{n_\eps}^2 < \eps^2$, which is similar to the observations we made for the Poisson example in Section \ref{subsec:Poisson}. The behavior of the percentage for which $u_{n_\eps}^2 < \eps^2$ when varying $\pi(\sigma^2)$ is similar to what we have observed for the Poisson parameter in Section \ref{subsec:Poisson}. For small values of the mean of $\pi(\sigma^2)$ the quota drops, which can be cured by increasing the standard deviation of $\pi(\sigma^2)$. The highest percentage can be found in the upper right corner.  For the prior marked by the black crosses in Figure \ref{fig:Music_acc}, the same as the one marked in Figure \ref{fig:Music_n}, the quota is around $88.7\%$ while the sample size predicted by the APVC and average coverage criterion only yields a percentage of around $49.9\%$. 
The bottom area of Figure \ref{subfig:Music_acc_s2}, where the posterior variance is heavily influenced by the prior, is not as distinct as in Figure \ref{subfig:Football_acc_VPVC}, this role seems to be played in this context by $\mu$ - compare Figure \ref{subfig:Music_acc_mu}.

Let us shortly summarize the most important observations we have made concerning SSD based on the prior predictive $m(\xx_n)$ via using the VPVC \eqref{eq:VPVC}. There are two strategies to have a good chance of achieving $u_n^2 < \eps^2$:

\begin{enumerate}
	\item A precise prior knowledge about the parameter of interest. This will result in small sample sizes
	\item A conservative estimate of the parameter that determines the variation of the data. This will result in larger sample sizes. 
		We have observed two different ways to achieve this.
		\begin{itemize}
			\item Make a conservative (that is rather large) guess for the mean of the prior marginal w.r.t this parameter.
			\item Choose a large enough standard deviation for this marginal prior. In order for this to work it is not sufficient to base the SSD on the average $\overline{u_n^2}$ only but higher moments as in the VPVC have to be taken into account.
		\end{itemize}
\end{enumerate}

In the Poisson example from Section \ref{subsec:Poisson} both strategies 1 and 2 concern the same parameter $\theta$. For the case of normally distributed data  we have observed that strategy 1 concerns the parameter of interest $\mu$ while to follow strategy 2 the parameter $\sigma^2$, that describes the variance of the data, was central.

\section{Behavior for small $\eps$}
\label{sec:asymptotics}

We have seen in Section \ref{sec:SSD} that the determination of the sample size via the VPVC can be successful, provided the prior knowledge is either precise enough or sufficiently conservative. Being ``conservative'' is of course a rather vague quality, but in this section we want to answer what happens if $\eps$ from \eqref{eq:VPVC} becomes smaller. Will it become easier or harder to be conservative? In Theorem \ref{thm:conservative} we show that as $\eps\rightarrow 0$ the VPVC tends to satisfy \eqref{eq:SSD_Goal} perfectly provided $k$ is bigger then some threshold $k_\ast$. In other words for such a $k$ the VPVC has a tendency to become conservative. 
If $k$ is smaller than $k_\ast$ on the other hand we will show that, asymptotically, the uncertainty will almost surely be above the expression used for the VPVC. This phase transition with respect to $k$, compare Figure \ref{fig:phase_transition} in the appendix, gives some inside on SSD based on the prior predictive in the small $\eps$ regime. We will give some ideas how to get an upper bound on $k_\ast$. Another result of this section is Lemma \ref{lem:n_eps}, where we show an asymptotic formula for the sample size $n_\eps$ determined by the VPVC.

First let us fix our setup. Similar as in Section \ref{sec:SSD} we will assume that there is univariate parameter of interest $\theta_0$ and a, possibly empty or multivariate, nuisance parameter $\theta'$ so that the total parameter that determines our sampling distribution $p(\xx_n |\theta)=p(x_1,\ldots, x_n|\theta)$ is given by
\begin{align}
	\theta = (\theta_0, \theta') \,.
\end{align}
We will denote the Fisher information matrix of the single sample distribution $p(x_1|\theta)$ by $I_\theta$ and its first component (the one linked to $\theta_0$) by $I_{\theta_0}$. Note, that since $\theta_0$ is univariate $I_{\theta_0}$ is a non-negative number. The knowledge about $\theta$ is described by the prior $\pi(\theta)$ with marginals $\pi(\theta_0)$ and $\pi(\theta')$. We further fix a value $\theta_{\mathrm{true}} =(\theta_{\mathrm{true}, 0}, \theta_{\mathrm{true}}')$ in the domain of $\pi(\theta)$ that we treat as the true parameter. 
In practice $\theta_{\mathrm{true}}$ will of course be unknown.
Given some data $\xx_n = (x_1,\ldots, x_n)$ we introduce as above the squared uncertainty by
$ u_n^2 = \Var_{\theta_0 \sim \pi(\theta_0|\xx_n)}(\theta_0)$
as well as $\overline{u_n^2}=\EE_{\xx_n \sim m(\xx_n)}[u_n^2]$, $\Delta u_n^2 = (\Var_{\xx_n \sim m(\xx_n)}(u_n^2))^{1/2}$ with $m(\xx_n) = \int p(\xx_n|\theta) \pi(\theta) \dd \theta$ and formulate the VPVC for $\eps>0$ as
\begin{align}
	\label{eq:AsVPVC}
	\overline{u_n^2} + k\,\Delta u_n^2 < \eps^2  \,.
\end{align}
The smallest $n$ satisfying \eqref{eq:AsVPVC} will be called once more $n_\eps$. 

\begin{assumption}
	\label{ass:bernstein}
	We assume that for $\pi(\theta)$ and $\theta_{\mathrm{true}}$ it holds:
	\begin{enumerate}
		\item \emph{Fisher information sufficiently regular}: The Fisher information $I_{\theta_{0}}$ is strictly positive for $\theta=\theta_{\mathrm{true}}$ and almost every $\theta$ in the domain of $\pi(\theta)$. Moreover we assume that the second moment $\EE_{\theta \sim \pi(\theta)}[I_{\theta_0}^{-2}]$ exists and that $\Var_{\theta \sim \pi(\theta)}(I_{\theta_0}^{-1})>0$.
		\item \emph{B.-v.-Mises limit holds in $L^2$}: The quantity $n\cdot u_n^2 $ converges in $L^2$ against $I_{\theta_0}^{-1}$ conditional under $\theta=\theta_{\mathrm{true}}$ and $\pi(\theta)$ in the sense that:
			\begin{align}
				\label{eq:BvM_L2}
				\begin{aligned}
					&\lim_{n\rightarrow\infty} \EE_{\xx_n \sim p(\xx_n | \theta_{\mathrm{true}})}[|n\cdot u_n^2 - I_{\theta_{\mathrm{true}, 0}}^{-1}|^2] = 0 \mbox { and  }  \\
				&\lim_{n\rightarrow\infty} \EE_{\theta \sim \pi(\theta)}[\EE_{\xx_n \sim p(\xx_n|\theta)}[|n\cdot u_n^2 - I_{\theta_0}^{-1}|^2]] = 0 \,.
				\end{aligned}
			\end{align}
		\item \emph{No trivial SSD}: For all $n$ we have $\overline{u_n^2}>0$.
	\end{enumerate}
\end{assumption}

From these assumptions point 2 is probably the one that needs the most explanation. The Bernstein-von-Mises theorem \cite{Vaart2000} indicates that under relatively mild conditions, and conditional on $\theta$, the posterior distribution is asymptotically normal with variance $\frac{1}{n} I_{\theta_0}^{-1}$, which motivates \eqref{eq:BvM_L2}. When considering variances, as in this article, it is then quite natural to require convergence in $L^2$, while the B.-v.-Mises theorem only ensures convergence in probability. 
While standard methods, such as Vitali's convergence theorem, could be applied to provide the first convergence \eqref{eq:BvM_L2} these seems more involved for the second limit as it is in a rather non-standard form. 
Examining this point in more detail would stray far away from the scope of this article and could be subject to future research. 
% While we are now aware of generic conditions from which the second convergence in point 2 follows
However, we found that both identities \eqref{eq:BvM_L2} were rather easy to check for many standard cases such as the ones from Section \ref{sec:SSD} or a Bernoulli distribution with a Beta prior. 

Under Assumptions \ref{ass:bernstein} the posterior variance is, conditional on $\theta$, asymptotically proportional to $I_{\theta_0}^{-1}$. The following quantity therefore describes the variation of the asymptotic posterior variance given the prior knowledge $\pi(\theta)$:  
\begin{align}
	\label{eq:gamma}
	\gamma = \frac{(\Var_{\theta \sim \pi(\theta)}(I_{\theta_0}^{-1}))^{1/2}}{\EE_{\theta \sim \pi(\theta)}[I_{\theta_0}^{-1}]} \,.
\end{align}

Let us make these considerations more precise.

\begin{lemma} \label{lem:asymptotics} 
Define the coefficient of variation under the prior predictive $m(\xx_n)$ and under $\theta_{\mathrm{true}}$ as 
\begin{align*}
	c_{n} = \frac{\Delta u_n^2}{\overline{u_n^2}} \qquad  \mbox{ and } \qquad
	c_{\mathrm{true}, n} = \frac{(\Var_{\xx_n \sim p(\xx_n |\theta_{\mathrm{true}})}(u_n^2))^{1/2}}{\EE_{\xx_n \sim p(\xx_n|\theta_{\mathrm{true}})}[u_n^2]} \,. 
\end{align*} Under Assumptions \ref{ass:bernstein} we then have \begin{align*} \lim_{n\rightarrow \infty} c_{n} = \gamma \mbox{ and } \lim_{n\rightarrow \infty} c_{\mathrm{true}, n} = 0 \,.
\end{align*}
\end{lemma}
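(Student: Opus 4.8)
The plan is to exploit the scale invariance of a coefficient of variation together with the $L^2$ Bernstein--von Mises convergence postulated in Assumption~\ref{ass:bernstein}. I would set $Y_n = n\cdot u_n^2$ and observe that multiplying a random variable by the deterministic factor $n$ rescales its standard deviation and its mean by the same factor, so that both coefficients of variation are left unchanged:
\begin{align*}
	c_n = \frac{\Delta u_n^2}{\overline{u_n^2}} = \frac{(\Var_{\xx_n\sim m(\xx_n)}(Y_n))^{1/2}}{\EE_{\xx_n \sim m(\xx_n)}[Y_n]}\,, \qquad c_{\mathrm{true},n} = \frac{(\Var_{\xx_n \sim p(\xx_n|\theta_{\mathrm{true}})}(Y_n))^{1/2}}{\EE_{\xx_n \sim p(\xx_n|\theta_{\mathrm{true}})}[Y_n]}\,.
\end{align*}
Thus it suffices to control the first two moments of $Y_n$ under the two relevant laws, which is precisely what \eqref{eq:BvM_L2} delivers.

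The key analytic step is the elementary fact that $L^2$ convergence transfers to convergence of the first two moments, hence of the variance. Concretely, if $Y_n \to Z$ in $L^2$ under a given probability measure, then by Cauchy--Schwarz $|\EE[Y_n] - \EE[Z]| \le \EE[|Y_n - Z|] \le \|Y_n - Z\|_{L^2} \to 0$, while the reverse triangle inequality gives $\bigl|\,\|Y_n\|_{L^2} - \|Z\|_{L^2}\,\bigr| \le \|Y_n - Z\|_{L^2} \to 0$, so that $\EE[Y_n^2] \to \EE[Z^2]$. Combining these yields $\Var(Y_n) = \EE[Y_n^2] - (\EE[Y_n])^2 \to \EE[Z^2] - (\EE[Z])^2 = \Var(Z)$.

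I would then apply this in the two settings. For the prior predictive the natural measure is the joint law $\pi(\theta)\,p(\xx_n|\theta)$; since $Y_n$ is a function of $\xx_n$ alone, its mean and variance under this joint law coincide with those under $m(\xx_n)$. The second identity in \eqref{eq:BvM_L2} is exactly $Y_n \to Z$ in $L^2$ with $Z = I_{\theta_0}^{-1}$, a function of $\theta\sim\pi(\theta)$; hence $\EE_{\xx_n\sim m(\xx_n)}[Y_n] \to \EE_{\theta\sim\pi(\theta)}[I_{\theta_0}^{-1}]$ and $\Var_{\xx_n\sim m(\xx_n)}(Y_n) \to \Var_{\theta\sim\pi(\theta)}(I_{\theta_0}^{-1})$, which gives $c_n \to \gamma$. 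Under $p(\xx_n|\theta_{\mathrm{true}})$ the first identity in \eqref{eq:BvM_L2} gives $Y_n \to I_{\theta_{\mathrm{true},0}}^{-1}$ in $L^2$, but now the limit is a \emph{constant}, so its variance vanishes, $\Var_{\xx_n\sim p(\xx_n|\theta_{\mathrm{true}})}(Y_n) \to 0$, and therefore $c_{\mathrm{true},n} \to 0$.

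It remains to check that the limiting denominators are finite and strictly positive, so that the ratios are well defined in the limit; this is where Assumption~\ref{ass:bernstein}.1 enters. Strict positivity of $I_{\theta_0}$ (at $\theta_{\mathrm{true}}$ and $\pi$-almost everywhere) guarantees $I_{\theta_0}^{-1}>0$, while the existence of $\EE_{\theta\sim\pi(\theta)}[I_{\theta_0}^{-2}]$ places $I_{\theta_0}^{-1}$ in $L^2(\pi)$, rendering both $\EE_{\theta\sim\pi(\theta)}[I_{\theta_0}^{-1}]$ and $\Var_{\theta\sim\pi(\theta)}(I_{\theta_0}^{-1})$ finite, the former positive by Cauchy--Schwarz. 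The only genuinely delicate point is the moment-convergence step, and in particular the passage from $L^2$ convergence to convergence of second moments; this is exactly why Assumption~\ref{ass:bernstein} demands $L^2$ rather than merely in-probability Bernstein--von Mises convergence. Everything else is routine bookkeeping.
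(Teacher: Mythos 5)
Your proposal is correct and follows essentially the same route as the paper's own proof: both rewrite the coefficients of variation in terms of $X_n = n\,u_n^2$, use the fact that $X_n$ depends only on $\xx_n$ (and $I_{\theta_0}^{-1}$ only on $\theta$) to identify moments under $m(\xx_n)$ with moments under the joint law $\pi(\theta)\,p(\xx_n|\theta)$, invoke the $L^2$ convergence from Assumption \ref{ass:bernstein} to pass the first two moments to the limit, and conclude $c_{\mathrm{true},n}\to 0$ because the conditional limit $I_{\theta_{\mathrm{true},0}}^{-1}$ is a constant. Your explicit justification of the moment-convergence step and of the positivity of the limiting denominators only spells out what the paper leaves implicit.
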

\begin{remark}
The limit of $c_{\mathrm{true}, n}$ is a special case of the one for $c_{n}$ by taking the delta distribution $\delta_{\theta_{\mathrm{true}}}$, centered at $\theta_{\mathrm{true}}$, as a prior. 
\end{remark}
\begin{proof} Let us first consider the limit of $c_n$. Note that the second condition of point 2 in Assumption \ref{ass:bernstein} states that $X_n := n\, u_n^2$ converges to $I_{\theta_0}^{-1}$ in $L^2$ with respect to the law $p(\theta, \xx_n) = \pi(\theta)\cdot p(\xx_n|\theta)$. 	Since $X_n$ is not dependent on $\theta$, $I_{\theta_0}$ is not dependent on $\xx_n$ and the marginals of $p(\theta,\xx_n)$ are $\pi(\theta)$ and $m(\xx_n)$ we conclude that \begin{align*}
	\Var_{\xx_n \sim m(\xx_n)} (X_n) &= \Var_{(\theta, \xx_n) \sim p(\theta,\xx_n)} (X_n)  \\
		&\rightarrow \Var_{(\theta, \xx_n) \sim p(\theta,\xx_n)} (I_{\theta_0}^{-1}) = \Var_{\theta \sim \pi(\theta)} (I_{\theta_0}^{-1})
	\end{align*}
and similar $\EE_{\xx_n \sim m(\xx_n)}[X_n] \rightarrow \EE_{\theta \sim \pi(\theta)}[I_{\theta_0}^{-1}]$. From this we obtain the claimed limit of $c_n$:
\begin{align*}
	c_n  	
	\overset{X_n = n\,u_n^2}{=}
	\frac{(\Var_{\xx_n \sim m(\xx_n)} (X_n))^{1/2}}{\EE_{\xx_n \sim m(\xx_n)}[X_n]}
	\rightarrow
	\frac{(\Var_{\theta\sim \pi(\theta)}(I_{\theta_0}^{-1}))^{1/2}}{\EE_{\theta\sim \pi(\theta)}[I_{\theta_0}^{-1}]} =
	\gamma\,.
\end{align*}
For the second part of the claim we use that by the first condition in point 2 of Assumption \ref{ass:bernstein} we have $L^2$ convergence of $X_n$ conditional on $\theta_{\mathrm{true}}$ from which we obtain indeed
\begin{align*}
	c_{\mathrm{true}, n} = \frac{(\Var_{\xx_n \sim p(\xx_n|\theta_{\mathrm{true}})}(X_n))^{1/2}}{\EE_{\xx_n \sim p(\xx_n|\theta_{\mathrm{true}})}[X_n]}  \rightarrow \frac{(\Var_{\xx_n \sim p(\xx_n|\theta_{\mathrm{true}})}(I_{\theta_{\mathrm{true}, 0}}^{-1}))^{1/2}}{\EE_{\xx_n \sim p(\xx_n|\theta_{\mathrm{true}})}[I_{\theta_{\mathrm{true}, 0}}^{-1}]} = 0 \,,
\end{align*}
where we used on the right hand side that $I_{\theta_{\mathrm{true}, 0}}$ does not depend on $\xx_n$.
\end{proof}

 We see from Lemma \ref{lem:asymptotics} that the variation of $u_n^2$ under the marginal $m(\xx_n)$ is behaving in a different manner as under the true parameter $\theta_{\mathrm{true}}$. The object $\Delta u_n^2 = (\Var_{\xx_n \sim m(\xx_n)}(u_n^2))^{1/2}$ decays at the same order as $\overline{u_n^2}=\EE_{\xx_n \sim m(\xx_n)}[u_n^2]$ whereas, being conditional on a parameter, the standard deviation $(\Var_{\xx_n \sim p(\xx_n|\theta_{\mathrm{true}})}(u_n^2))^{1/2}$ decays faster than $\EE_{\xx_n \sim p(\xx_n|\theta_{\mathrm{true}})}[u_n^2]$. The distributions $m(\xx_n)$ and $p(\xx_n|\theta_{\mathrm{true}})$ result in quite different asymptotic behavior for the moments of the posterior variance. 
We will see in Theorem \ref{thm:conservative} below that these different asymptotic properties will provoke some phase transition with respect to $k$ in the limit $\eps\rightarrow 0$.  

As a first consequence of the observations from Lemma \ref{lem:asymptotics} we will now see that the variance term $k\,\Delta u_n^2$ in the VPVC will strongly affect the sample size even as $\eps \rightarrow 0$. 

\begin{lemma}[VPVC sample size for small $\eps$]
	\label{lem:n_eps}
	Provided Assumption \ref{ass:bernstein} is true, we have
	\begin{align}
		\lim_{\eps \rightarrow 0} \frac{n_\eps}{\eps^{-2}\cdot (1+k\,\gamma)\EE_{\theta \sim \pi(\theta)}[I_{\theta_0}^{-1}]} = 1
	\end{align}
\end{lemma}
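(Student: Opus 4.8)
The plan is to reduce the statement to showing that $\eps^2 n_\eps \to L$, where
\[
L := (1+k\,\gamma)\,\EE_{\theta\sim\pi(\theta)}[I_{\theta_0}^{-1}],
\]
because the quotient whose limit is claimed equals exactly $\eps^2 n_\eps / L$. The first step is to identify the leading-order behaviour of the left-hand side of \eqref{eq:AsVPVC}. Writing $X_n = n\,u_n^2$ as in the proof of Lemma \ref{lem:asymptotics}, that proof already establishes $\EE_{\xx_n\sim m(\xx_n)}[X_n] \to \EE_{\theta\sim\pi(\theta)}[I_{\theta_0}^{-1}]$ and $\Var_{\xx_n\sim m(\xx_n)}(X_n) \to \Var_{\theta\sim\pi(\theta)}(I_{\theta_0}^{-1})$. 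Since $n\,\overline{u_n^2} = \EE_{\xx_n\sim m(\xx_n)}[X_n]$ and $n\,\Delta u_n^2 = (\Var_{\xx_n\sim m(\xx_n)}(X_n))^{1/2}$, setting $g(n) := n\bigl(\overline{u_n^2} + k\,\Delta u_n^2\bigr)$ I would conclude
\[
g(n) \;\longrightarrow\; \EE_{\theta\sim\pi(\theta)}[I_{\theta_0}^{-1}] + k\,(\Var_{\theta\sim\pi(\theta)}(I_{\theta_0}^{-1}))^{1/2} = L,
\]
where the last equality is just the definition \eqref{eq:gamma} of $\gamma$. Note that $L>0$: by Assumption \ref{ass:bernstein} the integrand $I_{\theta_0}^{-1}$ is a.e.\ positive so $\EE_{\theta\sim\pi(\theta)}[I_{\theta_0}^{-1}]>0$, and $1+k\,\gamma\geq 1$.

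The second step is a sandwich argument turning $g(n)\to L$ into the asymptotics of $n_\eps$. Writing $f(n) = \overline{u_n^2}+k\,\Delta u_n^2$, I first record that $f(n)>0$ for every fixed $n$ (by point 3 of Assumption \ref{ass:bernstein}, $\overline{u_n^2}>0$, and $k\,\Delta u_n^2\geq 0$), so for any finite $N$ the minimum $\min_{1\leq n< N} f(n)$ is a fixed positive number; hence once $\eps^2$ drops below it, the criterion \eqref{eq:AsVPVC} fails for all $n<N$, which shows $n_\eps\to\infty$ as $\eps\to 0$. Next, fix $\delta>0$ and choose $N_\delta$ so that $(L-\delta)/n < f(n) < (L+\delta)/n$ for all $n\geq N_\delta$. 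For the upper bound, every $n\geq\max\{N_\delta,(L+\delta)\eps^{-2}\}$ satisfies $f(n)<(L+\delta)/n\leq\eps^2$, so $n_\eps\leq (L+\delta)\eps^{-2}+1$ for small $\eps$, giving $\limsup_{\eps\to 0}\eps^2 n_\eps\leq L+\delta$. For the lower bound, every $n$ with $N_\delta\leq n<(L-\delta)\eps^{-2}$ satisfies $f(n)>(L-\delta)/n>\eps^2$; combined with the failure of the criterion for all $n<N_\delta$ at small $\eps$, this forces $n_\eps\geq(L-\delta)\eps^{-2}$, so $\liminf_{\eps\to 0}\eps^2 n_\eps\geq L-\delta$. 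Letting $\delta\to 0$ yields $\eps^2 n_\eps\to L$ and hence the claim.

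The step that requires the most care is the sandwich, precisely because $f(n)$ need not be monotone in $n$, so $n_\eps$ is genuinely the first hitting time of the sub-level set $\{f<\eps^2\}$ rather than an inverse function value. The key point that makes the argument go through cleanly is that the lower-bound estimate rules out the criterion for \emph{all} $n$ in the whole range $[1,(L-\delta)\eps^{-2})$ — the finitely many indices below $N_\delta$ via the $n_\eps\to\infty$ observation, and the remaining indices via the uniform tail bound $f(n)>(L-\delta)/n$ — so no spurious small solution can exist. I expect no further obstacle beyond bookkeeping the two regimes $n<N_\delta$ and $n\geq N_\delta$ consistently and ensuring all estimates hold simultaneously for $\eps$ small enough.
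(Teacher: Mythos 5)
Your proof is correct and takes essentially the same route as the paper: both arguments reduce the claim, via the convergences established in (the proof of) Lemma \ref{lem:asymptotics}, to the statement $n\bigl(\overline{u_n^2}+k\,\Delta u_n^2\bigr)\rightarrow(1+k\,\gamma)\,\EE_{\theta\sim\pi(\theta)}[I_{\theta_0}^{-1}]$, and then convert this into asymptotics for the first hitting time $n_\eps$. The only difference is bookkeeping: you run a direct two-sided sandwich with a threshold $N_\delta$ (correctly handling the non-monotonicity of $n\mapsto\overline{u_n^2}+k\,\Delta u_n^2$ by excluding all smaller indices), whereas the paper obtains the lower bound directly from the definition of $n_\eps$ together with $n_\eps\rightarrow\infty$, and the upper bound by contradiction via an explicit integer $n_\eps'$ near $\lfloor s_\infty\eps^{-2}\rfloor$; both executions are sound.
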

\begin{proof}
	We have used already in the first part of the proof of Lemma \ref{lem:asymptotics} that $n\overline{u_n^2} \rightarrow \EE_{\theta\sim \pi(\theta)}[I_{\theta_0}^{-1}]$ so that applying in addition the convergence $c_n\rightarrow \gamma$ from Lemma \ref{lem:asymptotics} we arrive at 
	\begin{align}
		\begin{aligned}
			s_n &:= n\,  (\overline{u_n^2} + k\, \Delta u_n^2) = n\overline{u^2_n} (1+k\, c_n) 
			&\overset{n\rightarrow \infty}{\longrightarrow} (1+k\,\gamma)\EE_{\theta\sim \pi(\theta)}[I_{\theta_0}^{-1}]=: s_{\infty} \,.
		\end{aligned}
	\label{eq:s_n_convergence}
	\end{align}
	Next, observe that since $\overline{u_n^2}>0$ by point 3 of Assumption \ref{ass:bernstein} this implies that we can find a constant $c>0$ such that for any $n$ we have $\frac{c}{n}<\overline{u_n^2}+k\Delta u_n^2$, from which we get $n_\eps > c \cdot \eps^{-2}$ and in particular
	\begin{align}
		\label{eq:n_eps_diverges}
		\lim_{\eps \rightarrow 0} n_\eps = \infty \,.
	\end{align}
	Let us rewrite the claim of the lemma as
	\begin{align}
		\lim_{\eps \rightarrow 0} \frac{n_\eps}{\eps^{-2} \cdot s_\infty}  =1\,.
		\label{eq:claim_lemma}
	\end{align}
	From \eqref{eq:s_n_convergence}, and the choice of $n_\eps$, we already have one inequality of \eqref{eq:claim_lemma}:
	\begin{align*}
		\liminf_{\eps \rightarrow 0} \frac{n_\eps}{\eps^{-2} s_{\infty}} \overset{\mbox{\tiny def. of  } n_\eps}{\geq} \liminf_{\eps\rightarrow 0} 
		\frac{n_\eps \cdot (\overline{u^2_{n_\eps}}+k\,\Delta u^2_{n_\eps})}{s_\infty} 
		=\liminf_{\eps \rightarrow 0} \frac{s_{n_\eps}}{s_\infty} \overset{\mbox{\tiny \eqref{eq:s_n_convergence},\,\eqref{eq:n_eps_diverges}}}{=} 1 \,.
	\end{align*}
	To show the opposite inequality we argue by contradiction. Suppose that there is a $\delta>0$ such that
	\begin{align}
		\label{eq:absurd_statement}
		\limsup_{\eps \rightarrow 0} \frac{n_\eps}{\eps^{-2} \cdot s_\infty}  \geq  1+2\,\delta \,.
	\end{align}
	Define $m_\eps=\lfloor s_\infty \eps^{-2}\rfloor$, the largest integer less equal $s_\infty \eps^{-2}$, and note that for $\eps$ small enough there is an integer $n_\eps' $ such that the following nested inequality is true
	\begin{align}
		\label{eq:nested_ineq}
		1 \leq m_\eps <(1+\delta/2) (1+m_\eps) \leq n_\eps' \leq m_\eps (1+\delta) \overset{\mbox{\tiny \eqref{eq:absurd_statement}}}{<} n_\eps \,.
	\end{align}
	Choose further $\eps$ small enough such that for any $n > m_\eps$ we have $s_{n}/s_\infty < (1+\delta/2)$. But then the following inequality holds:
	\begin{align*}
		\overline{u^2_{n_\eps'}} +k\cdot \Delta u^2_{n_\eps'} = \frac{s_{n_\eps'}}{n_\eps'}
		\overset{n_\eps'>m_\eps}{<} \frac{(1+\delta/2)\cdot s_\infty}{n_\eps'} \overset{\eqref{eq:nested_ineq}}{\leq} 
		\frac{s_\infty}{1 + \lfloor s_\infty \eps^{-2} \rfloor} \leq \eps^2 \,.
	\end{align*}
	By \eqref{eq:nested_ineq} we have thus found a smaller integer $1 \leq n_\eps' < n_\eps$ that satisfies the VPVC \eqref{eq:AsVPVC}, which contradicts the choice of $n_\eps$. 
\end{proof}

For the setup considered in Section \ref{subsec:normal} we sketched the convergence of the object from Lemma \ref{lem:n_eps} for various priors in Figure \ref{fig:asymptotic_formula}. The formula from Lemma \ref{lem:n_eps} allows for a few interesting observations. 

\begin{wrapfigure}{l}{0.5\textwidth}
		\centering
		\includegraphics[width=0.48\textwidth]{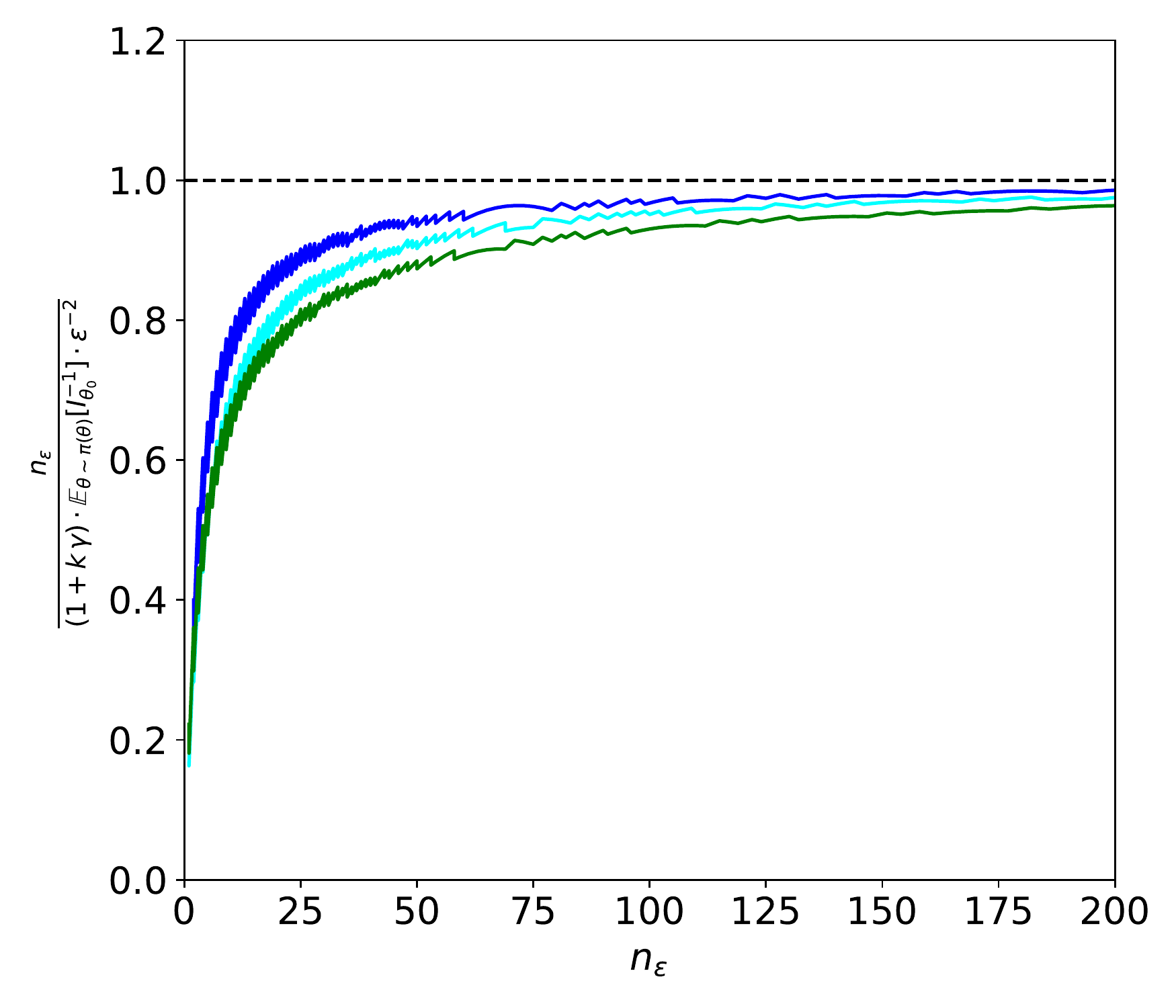}
		\caption{Convergence of the expression from Lemma \ref{lem:n_eps} for the setup from Section \ref{subsec:normal}, $k=2$, various priors $\pi(\sigma^2)$ and the fixed prior $\pi(\mu)$ marked in Figure \ref{subfig:Music_n_mu} with mean 3.5$\,\mathrm{min}$ and standard deviation 1.0$\,\mathrm{min}$. Choices for $\pi(\sigma^2)$, all in $\mathrm{min}^2$: \emph{blue}: $2.0\pm 1.5$, \emph{cyan}: $3.0\pm 1.5$ (same as marked in Fig. \ref{subfig:Music_n_s2}), \emph{green}: $3.0\pm 0.75$.   }
		\label{fig:asymptotic_formula}
\end{wrapfigure}
First, we see that asymptotically the prior $\pi(\theta)$ has an impact on the sample size through $\EE_{\theta \sim \pi(\theta)}[I_{\theta_0}^{-1}]$ (the expectation of the rescaled B.-v.-Mises limit of the posterior variance) and through the coefficient of variation $\gamma$. The higher the latter the more influence will the choice of $k$ have on the sample size. Moreover, comparing the sample sizes $n_\eps$ from the VPVC and $\tilde{n}_\eps$ from the APVC ($k=0$) we see that taking the variance into account will increase the sample size by a factor of $\frac{n_\eps}{\tilde{n}_\eps} \simeq 1+k\cdot \gamma$. Finally, note that we do not need any explicit expression for $u_n^2$ when using the asymptotic expression of $n_\eps$. Given the Fisher information $I_{\theta_0}$ of the sampling distribution and any prior $\pi(\theta)$ we can directly compute the sample size for small $\eps$ without any need of computing the posterior distribution.

We will now turn to the main result of this section.
In Section \ref{sec:SSD} we have evaluated the ``success'' of the SSD by evaluating how often we will have $u_{n_\eps}^2 <\eps^2$ on the actual dataset, recall for instance Figure \ref{subfig:Football_acc_VPVC}. In the following we want to do this for $\eps\rightarrow 0$ in a purely generic setup by assuming that the data $\xx_n$ follows the distribution $p(\xx_n|\theta_{\mathrm{true}})$ with the true parameter $\theta_{\mathrm{true}}$. We have already observed in Lemma \ref{lem:asymptotics} that conditioning on a parameter like $\theta_{\mathrm{true}}$ leads to a different asymptotic behavior than considering the marginal $m(\xx_n)$.  The following theorem shows the consequence of this disparity concerning the success of the SSD.

\begin{theorem}[VPVC becomes conservative for large $k$]
	\label{thm:conservative}
	Assume that Assumptions \ref{ass:bernstein} hold and define
	\begin{align}
		\label{eq:k_ast}
		k_\ast = k_\ast (\theta_{\mathrm{true}}) := \frac{\max(I_{\theta_{\mathrm{true},0}}^{-1}-\EE_{\theta \sim \pi(\theta)}[I_{\theta_0}^{-1}],0)}{(\Var_{\theta \sim \pi(\theta)}(I_{\theta_0}^{-1}))^{1/2}} \,.
	\end{align}
	We then have for any $k>k_\ast$
	\begin{align}
		\label{eq:variance_is_conservative}
		\lim_{n\rightarrow \infty} \mathbb{P}_{\xx_{n} \sim p(\xx_{n}|\theta_{\mathrm{true}})}\left(u_n^2 > \overline{u_n^2} + k\,\Delta u_n^2 \right) = 0 \,. 
	\end{align}
	In particular, we have for any SSD build on the VPVC \eqref{eq:AsVPVC} with such a $k$
	\begin{align}
		\label{eq:VPVC_conservative}
		\lim_{\eps \rightarrow 0} \mathbb{P}_{\xx_{n_\eps} \sim p(\xx_{n_\eps}|\theta_{\mathrm{true}})}\left(u_{n_\eps}^2 \geq \eps^2\right) = 0 \,.
	\end{align}
	Moreover, $k_\ast$ is optimal in the sense that for any $0<k<k_\ast$ we have 
	\begin{align}
		\label{eq:careful}
		\lim_{n\rightarrow \infty} \mathbb{P}_{\xx_n \sim p(\xx_n|\theta_{\mathrm{true}})}\left(u_n^2 < \overline{u_n^2} + k \Delta u_n^2 \right) = 0 \,.
	\end{align}
\end{theorem}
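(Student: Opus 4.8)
The plan is to reduce all three assertions to a comparison of two limits obtained after multiplying through by $n$. First I would rescale the event in \eqref{eq:variance_is_conservative}: the inequality $u_n^2 > \overline{u_n^2} + k\,\Delta u_n^2$ is equivalent to $n\,u_n^2 > s_n$, where $s_n = n(\overline{u_n^2}+k\,\Delta u_n^2)$ is exactly the deterministic quantity studied in the proof of Lemma \ref{lem:n_eps}. By \eqref{eq:s_n_convergence} we have $s_n \to s_\infty = (1+k\gamma)\,\EE_{\theta\sim\pi(\theta)}[I_{\theta_0}^{-1}]$, which, using $\gamma\,\EE_{\theta\sim\pi(\theta)}[I_{\theta_0}^{-1}] = (\Var_{\theta\sim\pi(\theta)}(I_{\theta_0}^{-1}))^{1/2}$, rewrites as $s_\infty = \EE_{\theta\sim\pi(\theta)}[I_{\theta_0}^{-1}] + k\,(\Var_{\theta\sim\pi(\theta)}(I_{\theta_0}^{-1}))^{1/2}$. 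On the other side, the first $L^2$ limit in \eqref{eq:BvM_L2} together with Chebyshev's inequality shows that, under $p(\xx_n|\theta_{\mathrm{true}})$, the random variable $n\,u_n^2$ converges in probability to the constant $L_\infty := I_{\theta_{\mathrm{true},0}}^{-1}$.

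The core is then an elementary comparison of $L_\infty$ and $s_\infty$. A direct rearrangement gives that $L_\infty < s_\infty$ is equivalent to $I_{\theta_{\mathrm{true},0}}^{-1} - \EE_{\theta\sim\pi(\theta)}[I_{\theta_0}^{-1}] < k\,(\Var_{\theta\sim\pi(\theta)}(I_{\theta_0}^{-1}))^{1/2}$. Invoking the definition \eqref{eq:k_ast} of $k_\ast$ and the positivity $\Var_{\theta\sim\pi(\theta)}(I_{\theta_0}^{-1})>0$ from point 1 of Assumption \ref{ass:bernstein}, and treating separately whether $I_{\theta_{\mathrm{true},0}}^{-1}-\EE_{\theta\sim\pi(\theta)}[I_{\theta_0}^{-1}]$ is positive (so the truncation in $k_\ast$ is inactive) or not (so $k_\ast=0$), this inequality holds exactly when $k>k_\ast$; by the same computation $s_\infty < L_\infty$ holds exactly when $0<k<k_\ast$.

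To turn the sign of $L_\infty - s_\infty$ into the desired probabilities I would insert a separating constant. For $k>k_\ast$ pick $c$ strictly between $L_\infty$ and $s_\infty$; then $\{n\,u_n^2 > s_n\}\subseteq \{n\,u_n^2 > c\}\cup\{s_n < c\}$, and both right-hand events have vanishing probability — the first because $n\,u_n^2 \to L_\infty < c$ in probability, the second because the deterministic sequence $s_n\to s_\infty>c$ is eventually above $c$. This establishes \eqref{eq:variance_is_conservative}. The mirror-image inclusion $\{n\,u_n^2 < s_n\}\subseteq \{n\,u_n^2 < c\}\cup\{s_n > c\}$ with $c$ between $s_\infty$ and $L_\infty$ yields \eqref{eq:careful} for $0<k<k_\ast$.

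Finally, for \eqref{eq:VPVC_conservative} I would use the minimality defining $n_\eps$: it guarantees $\overline{u_{n_\eps}^2}+k\,\Delta u_{n_\eps}^2 < \eps^2$, so on the event $\{u_{n_\eps}^2 \geq \eps^2\}$ one automatically has $u_{n_\eps}^2 > \overline{u_{n_\eps}^2}+k\,\Delta u_{n_\eps}^2$, giving $\mathbb{P}(u_{n_\eps}^2\geq\eps^2)\leq \mathbb{P}(u_{n_\eps}^2 > \overline{u_{n_\eps}^2}+k\,\Delta u_{n_\eps}^2)$. Since $n_\eps\to\infty$ as $\eps\to 0$ by \eqref{eq:n_eps_diverges}, composing the limit \eqref{eq:variance_is_conservative} along the sequence $n_\eps$ sends the bound to $0$. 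The step I expect to require the most care is keeping the case distinction in $k_\ast$ (the maximum with $0$, which is active precisely when the prior is already conservative relative to $\theta_{\mathrm{true}}$) aligned with the strict inequalities needed to place the separating constant; the remainder is just convergence in probability against a deterministic limit.
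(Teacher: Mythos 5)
Your proposal is correct, and it reaches the result by a somewhat different decomposition than the paper's, built from the same ingredients. The paper keeps everything on the original scale: it centers the event at $\EE_{\xx_n \sim p(\xx_n|\theta_{\mathrm{true}})}[u_n^2]$, shows the gap $\overline{u_n^2}-\EE_{\xx_n \sim p(\xx_n|\theta_{\mathrm{true}})}[u_n^2]+k\,\Delta u_n^2 = \overline{u_n^2}\,h_n$ is eventually positive because $h_n \rightarrow -\rho + k\gamma > 0$ for $k>k_\ast$, and then applies Chebyshev's inequality directly to the exceedance probability, with the numerator killed by $c_{\mathrm{true},n}^2 \rightarrow 0$ from Lemma \ref{lem:asymptotics}. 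You instead multiply through by $n$ and reduce all three claims to a comparison of two constants: $n\,u_n^2 \rightarrow I_{\theta_{\mathrm{true},0}}^{-1}$ in probability under $p(\xx_n|\theta_{\mathrm{true}})$ (the first limit in \eqref{eq:BvM_L2} plus Chebyshev), versus the deterministic limit $s_n \rightarrow s_\infty$ reused from \eqref{eq:s_n_convergence} in the proof of Lemma \ref{lem:n_eps}, followed by a separating-constant/union-bound step; your case analysis showing that $k \gtrless k_\ast$ is exactly equivalent to $s_\infty \gtrless I_{\theta_{\mathrm{true},0}}^{-1}$ (using $\Var_{\theta\sim\pi(\theta)}(I_{\theta_0}^{-1})>0$ from point 1 of Assumption \ref{ass:bernstein}) is sound, as is your treatment of \eqref{eq:VPVC_conservative} via minimality of $n_\eps$ and \eqref{eq:n_eps_diverges}, which coincides with the paper's. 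Both arguments rest on Chebyshev and on the same two limits; what your packaging buys is transparency — the phase transition is literally the sign of $s_\infty - I_{\theta_{\mathrm{true},0}}^{-1}$, and the argument is modular enough that \eqref{eq:careful} is a genuine mirror image rather than a sketched ``follows along the same lines.'' What the paper's version buys is a quantitative bound: its Chebyshev estimate shows the probability in \eqref{eq:variance_is_conservative} is $O(c_{\mathrm{true},n}^2)$, i.e.\ it comes with a rate, whereas your convergence-in-probability argument yields only the qualitative limit.
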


\begin{proof}
	We introduce $\rho=\frac{I_{\theta_{\mathrm{true},0}}^{-1}}{\EE_{\theta \sim \pi(\theta)}[I_{\theta_0}^{-1}]}-1$ so that $k_\ast$ can be written as $k_\ast=\max(\rho,0)/\gamma$ with $\gamma$ as in Lemma \ref{lem:asymptotics}. Similar as in the proof of Lemma \ref{lem:asymptotics} we see that
	\begin{align}
		\label{eq:rho}
		\rho = \lim_{n\rightarrow 0} \frac{\EE_{\xx_n \sim p(\xx_n |\theta_{\mathrm{true}})}[u_n^2]}{\overline{u_n^2}} -1 \,.
	\end{align}
	Let us first show \eqref{eq:variance_is_conservative}. We can rewrite the inequality inside the probability as
\begin{align}
	\label{eq:ChebyProb}
	u_n^2 - \EE_{\xx_n \sim p(\xx_n |\theta_{\mathrm{true}})}[u_n^2]> \overline{u_n^2}- \EE_{\xx_n \sim p(\xx_n |\theta_{\mathrm{true}})}[u_n^2] + k \Delta u_n^2  \,.
\end{align}
	The expression on the right hand side becomes finally positive when $n$ is high enough. Indeed, we can we reshape it, using point 3 of Assumption \ref{ass:bernstein} and the object $c_n$ from Lemma \ref{lem:asymptotics}, as 
\begin{align*}
	\overline{u_n^2} \cdot \left( 1- \frac{\EE_{\xx_n\sim p(\xx_n|\theta_{\mathrm{true}})}[u_n^2]}{\overline{u_n^2}}+ k c_n \right) =: \overline{u_n^2} \cdot h_n
\end{align*}
	where $h_n$ satisfies $\lim_{n\rightarrow \infty} h_n = -\rho + k\cdot \gamma>0$ due to Lemma \ref{lem:asymptotics}, \eqref{eq:rho} and the choice $k>k_{\ast}=\max(\rho,0)/\gamma$. 
	For $n$ large enough such that the right hand side of \eqref{eq:ChebyProb} is positive, we can apply Chebyshev's inequality, which yields 
\begin{align*}
	&\limsup_{n\rightarrow \infty} \mathbb{P}_{\xx_n \sim p(\xx_n|\theta_{\mathrm{true}})}\left(u_n^2 > \overline{u_n^2}+ k \Delta u_n^2 \right) \\
	&\leq \limsup_{n\rightarrow \infty} \frac{\Var_{\xx_n \sim p(\xx_n|\theta_{\mathrm{true}})}(u_n^2)}{(\overline{u_n^2}- \EE_{\xx_n \sim p(\xx_n |\theta_{\mathrm{true}})}[u_n^2] + k \Delta u_n^2)^2 }  \\
	& = \limsup_{n\rightarrow \infty} c_{\mathrm{true},n}^2 \cdot \frac{\EE_{\xx_n \sim p(\xx_n|\theta_{\mathrm{true}})}[u_n^2]^2}{\overline{u_n^2}\cdot h_n^2} \leq \lim_{n\rightarrow \infty} c_{\mathrm{true},n}^2 \cdot \left(\frac{1+\rho}{-\rho+k\gamma}\right)^2=0 \,,
\end{align*}
	where we used once more \eqref{eq:rho} and that we know from Lemma \ref{lem:asymptotics} that $c_{\mathrm{true}, n}$ converges to 0. From \eqref{eq:variance_is_conservative} we immediately obtain \eqref{eq:VPVC_conservative} by the definition of $n_\eps$ and the fact that $n_\eps \rightarrow \infty$, due to Lemma \ref{lem:n_eps}.  To show \eqref{eq:careful} we reshape the expression inside the parentheses of \eqref{eq:careful} into 
	\begin{align}
		\EE_{\xx_n \sim p(\xx_n|\theta_{\mathrm{true}})}[u_n^2] - u_n^2 > \EE_{\xx_n \sim p(\xx_n|\theta_{\mathrm{true}})}[u_n^2] - (\overline{u_n^2} + k \Delta u_n^2 ) \,.
	\end{align}
	The rest of the argument then follows along the lines of the first part of the proof.
\end{proof}

Theorem \ref{thm:conservative} reveals some sort of phase transition. For a $k$ smaller than some threshold $k_\ast$ the squared uncertainty $u_n^2$ will asymptotically be above $u_n^2 + k\Delta u_n^2$, while for $k>k_\ast$ we will have $u_n^2 < \overline{u_n^2} + k \Delta u_n^2$ with probability almost 1 for large $n $ (or small $\eps$). 
This effect is shown in Figure \ref{fig:theorem_figure} for the Football goal data from section \ref{subsec:Poisson} and a fixed prior. 

\begin{wrapfigure}{l}{0.5\textwidth}
		\centering
		\includegraphics[width=0.48\textwidth]{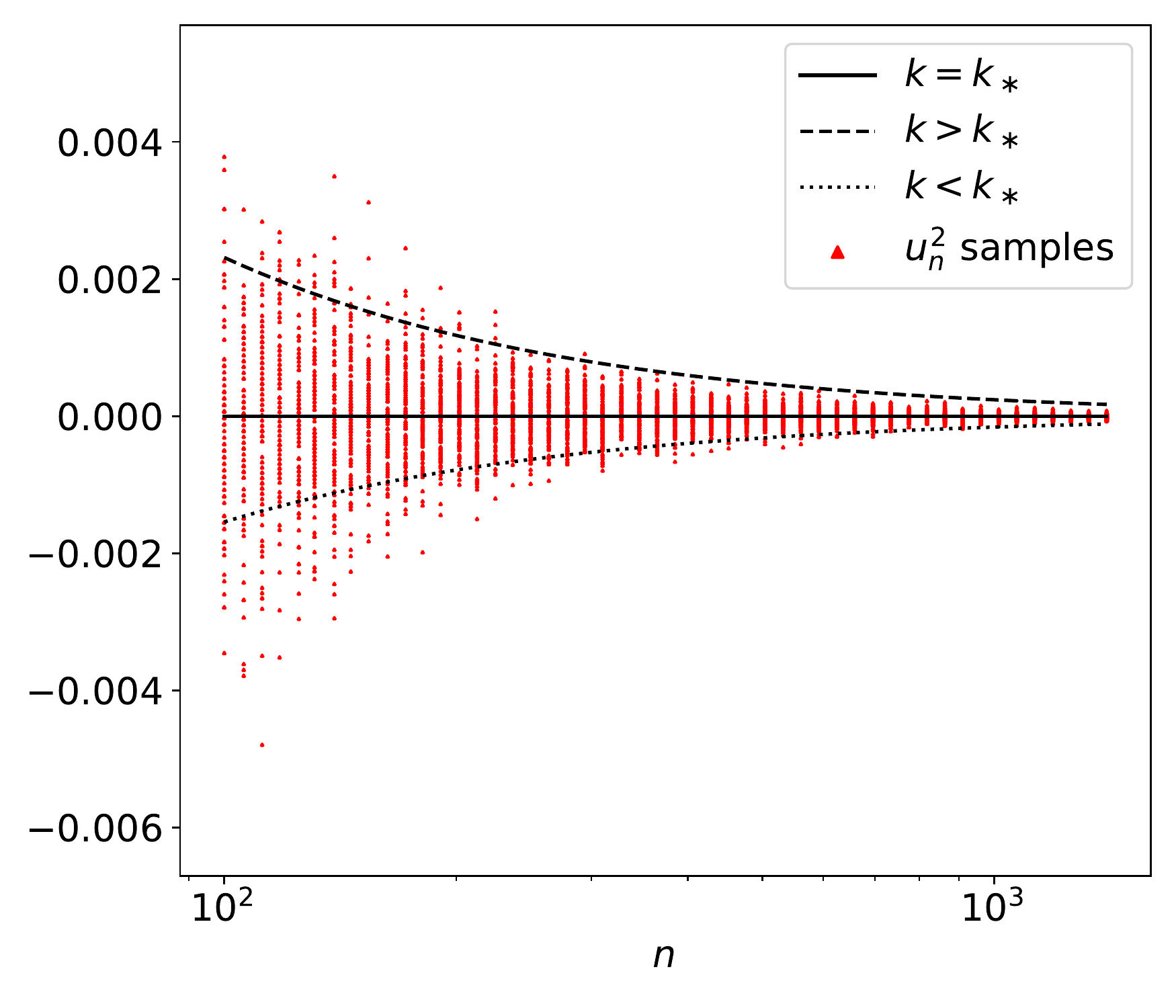}
		\caption{Illustration of Theorem \ref{thm:conservative} for an actual dataset. Samples of $u_n^2$ (in red) for different sample sizes using the football goal dataset and the setup from Section \ref{subsec:Poisson} for a prior $\pi(\theta)$ with mean 2.5 and a standard deviation of 1.0. The black lines mark $\overline{u_n^2 } + k \Delta u_n^2$ for $k=k_\ast = 0.2$ (computed from the full dataset) and $k=k_\ast \pm 0.15$. For illustration purposes everything was shifted by $\overline{u^2_n} + k_\ast \Delta u^2_n$.  }
		\label{fig:theorem_figure}
\end{wrapfigure}
The behavior predicted by Theorem \ref{thm:conservative} might be surprising at first glance: recall that $\overline{u_n^2}$ is defined as an average and that the object  $\Delta u_n^2$ is the according standard deviation. Naively one might expect that the range of $k$ times the standard deviation around the mean might always cover some percentage strictly between $0$ and $1$. The reason behind the observed phenomenon is that we use one distribution, $m(\xx_n)$, to perform the SSD and another one, $p(\xx_n|\theta_{\mathrm{true}})$, to evaluate its success. The different asymptotics between these two distributions then gives rise to the erratic behavior around $k=k_\ast$. The effect for a fixed $k$ and various priors is depicted by Figure \ref{fig:phase_transition} in the appendix: As $\eps$ gets small we get two sharply separated regions for the hyperparameters where one region has a quota of $u_{n_\eps}^2 <\eps^2$ around $0\%$ while the other one approaches $100\%$.
We expect that this phenomenon is not solely adherent to the VPVC. For instance, it seems reasonable that for many cases a credible interval for $u^2_n$ under $m(\xx_n)$ is, at least loosely, related to a certain $k$ so that similar effects are likely to arise for a framework based on other approaches using $m(\xx_n)$. 
Investigating such a point further might be an interesting subject for future research.

We have seen in Theorem \ref{thm:conservative} that for small $\eps$ the success of the SSD crucially depends on having $k>k_\ast$. Now, $k_\ast$ is dependent on the true parameter $\theta_{\mathrm{true}}$ and thus unknown in practice. For the prior used for Figure \ref{fig:theorem_figure} we have a $k_\ast$ around $0.21$ and therefore much smaller than the $k=2$ used in the analysis of Section \ref{subsec:Poisson}. In general one might choose for an area $\Theta$ in the domain of $\pi(\theta)$ as an upper bound for $k_\ast$
\begin{align}
	k_{\ast}^{\tiny \mbox{up. b.}} = \sup_{\theta \in \Theta}  k_\ast(\theta) \,.
\end{align}
Taking a single standard deviation around the prior mean for $\pi(\theta)$ and $\pi(\sigma^2)$ respectively we get for the prior from Figure \ref{fig:theorem_figure} $k_{\ast}^{\tiny \mbox{up. b.}} = 1.2$ and for the one marked by the black cross in Figure \ref{subfig:Music_acc_s2} $k_{\ast}^{\tiny \mbox{up. b.}} =1.0$. For two standard deviations we obtain 2.2 and 2.0 respectively. All these values are well above the actual values of $k_\ast$ which are 0.2 and 0.7 for these two examples and below or at least close to the value of $k=2$ we applied throughout Section \ref{sec:SSD}.

\section{Conclusions} 
We discussed a Bayesian criterion for sample size determination, based on the prior predictive, which we called the variation of posterior variance criterion (VPVC). Compared with the classical average posterior variance criterion (APVC) this criterion leads to a better compliance with the objective to restrain the uncertainty by some $\eps$, while still providing explicit expressions in contrast to a full treatment of the law of the uncertainty under the prior predictive. In particular this allows to treat the asymptotic behavior for small $\eps$ in a generic manner and thus to enhance the understanding of sample size methods based on the prior predictive.

Using two different datasets we discussed the dependency of the sample size determination and its success on the chosen prior and the true parameter and deduced concepts on how to choose the prior to lead the sample size determination more likely to a success. In particular, we observed that a part of these strategies cannot be applied for the APVC but only for methods such as the VPVC, that take higher moments with respect to the prior predictive into account. 

Finally, we gave some results concerning the behavior of the VPVC for $\eps\rightarrow 0$. We proved an explicit formula for the predicted sample size in this regime and showed that there is an exact limit for the portion of the variance w.r.t. prior predictive that can be used in order to guarantee the success of the sample size determination in this limit.

\bibliography{SSD_Post_Var} 

\begin{thebibliography}{10}

\bibitem{Desu2012}
M.~Desu, {\em Sample Size Methodology}.
\newblock Elsevier, 2012.

\bibitem{Charan2013}
J.~Charan and N.~Kantharia, ``How to calculate sample size in animal
  studies?,'' {\em Journal of pharmacology \& pharmacotherapeutics}, vol.~4,
  no.~4, p.~303, 2013.

\bibitem{Dell2002}
R.~B. Dell, S.~Holleran, and R.~Ramakrishnan, ``Sample size determination,''
  {\em ILAR journal}, vol.~43, no.~4, pp.~207--213, 2002.

\bibitem{Turkkan1992}
T.~Pham-Gia and N.~Turkkan, ``Sample size determination in {B}ayesian
  analysis,'' {\em Journal of the Royal Statistical Society: Series D (The
  Statistician)}, vol.~41, no.~4, pp.~389--397, 1992.

\bibitem{Joseph2019}
L.~Joseph and P.~B{\'e}lisle, ``Bayesian consensus-based sample size criteria
  for binomial proportions,'' {\em Statistics in medicine}, vol.~38, no.~23,
  pp.~4566--4573, 2019.

\bibitem{Wang2002}
F.~Wang, A.~E. Gelfand, {\em et~al.}, ``A simulation-based approach to
  {B}ayesian sample size determination for performance under a given model and
  for separating models,'' {\em Statistical Science}, vol.~17, no.~2,
  pp.~193--208, 2002.

\bibitem{Santis2007}
F.~De~Santis, ``Using historical data for {B}ayesian sample size
  determination,'' {\em Journal of the Royal Statistical Society: Series A
  (Statistics in Society)}, vol.~170, no.~1, pp.~95--113, 2007.

\bibitem{Adcock1997}
C.~Adcock, ``Sample size determination: a review,'' {\em Journal of the Royal
  Statistical Society: Series D (The Statistician)}, vol.~46, no.~2,
  pp.~261--283, 1997.

\bibitem{Lan2008}
C.~E. M'lan, L.~Joseph, D.~B. Wolfson, {\em et~al.}, ``Bayesian sample size
  determination for binomial proportions,'' {\em Bayesian Analysis}, vol.~3,
  no.~2, pp.~269--296, 2008.

\bibitem{Santis2006}
F.~De~Santis, ``Sample size determination for robust bayesian analysis,'' {\em
  Journal of the American Statistical Association}, vol.~101, no.~473,
  pp.~278--291, 2006.

\bibitem{Rubin1998}
D.~B. Rubin and H.~S. Stern, ``Sample size determination using posterior
  predictive distributions,'' {\em Sankhy{\=a}: The Indian Journal of
  Statistics, Series B}, pp.~161--175, 1998.

\bibitem{Brutti2008}
P.~Brutti, F.~De~Santis, and S.~Gubbiotti, ``Robust {B}ayesian sample size
  determination in clinical trials,'' {\em Statistics in Medicine}, vol.~27,
  no.~13, pp.~2290--2306, 2008.

\bibitem{Gubbiotti2011}
S.~Gubbiotti and F.~De~Santis, ``A bayesian method for the choice of the sample
  size in equivalence trials,'' {\em Australian \& New Zealand Journal of
  Statistics}, vol.~53, no.~4, pp.~443--460, 2011.

\bibitem{Sambucini2008}
V.~Sambucini, ``A bayesian predictive two-stage design for phase ii clinical
  trials,'' {\em Statistics in Medicine}, vol.~27, no.~8, pp.~1199--1224, 2008.

\bibitem{Football}
M.~J{\"u}risoo, ``International football results from 1872 to 2019,'' 2019.
\newblock Available on https://www.kaggle.com [Online; accessed 13th December
  2019].

\bibitem{MillionSongs}
T.~Bertin-Mahieux, D.~P. Ellis, B.~Whitman, and P.~Lamere, ``The million song
  dataset,'' in {\em {Proceedings of the 12th International Conference on Music
  Information Retrieval ({ISMIR} 2011)}}, 2011.

\bibitem{Fink1997}
D.~Fink, ``A compendium of conjugate priors,'' tech. rep., 1997.

\bibitem{Adcock1988}
C.~Adcock, ``A bayesian approach to calculating sample sizes,'' {\em Journal of
  the Royal Statistical Society: Series D (The Statistician)}, vol.~37,
  no.~4-5, pp.~433--439, 1988.

\bibitem{Joseph1995}
L.~Joseph, D.~B. Wolfson, and R.~D. Berger, ``Sample size calculations for
  binomial proportions via highest posterior density intervals,'' {\em Journal
  of the Royal Statistical Society: Series D (The Statistician)}, vol.~44,
  no.~2, pp.~143--154, 1995.

\bibitem{Vaart2000}
A.~W. Van~der Vaart, {\em Asymptotic statistics}, vol.~3.
\newblock Cambridge university press, 2000.

\end{thebibliography}
\bibliographystyle{ieeetr}

\newpage
\appendix

\section{Effect of decreasing $\eps$ on $u^2_{n_\eps}<\eps^2$}%
\renewcommand\thefigure{\thesection.\arabic{figure}}    
\setcounter{figure}{0}
\begin{figure}[h!]
\centering
	\begin{subfigure}[t]{0.40\textwidth}
		\includegraphics[width=\textwidth]{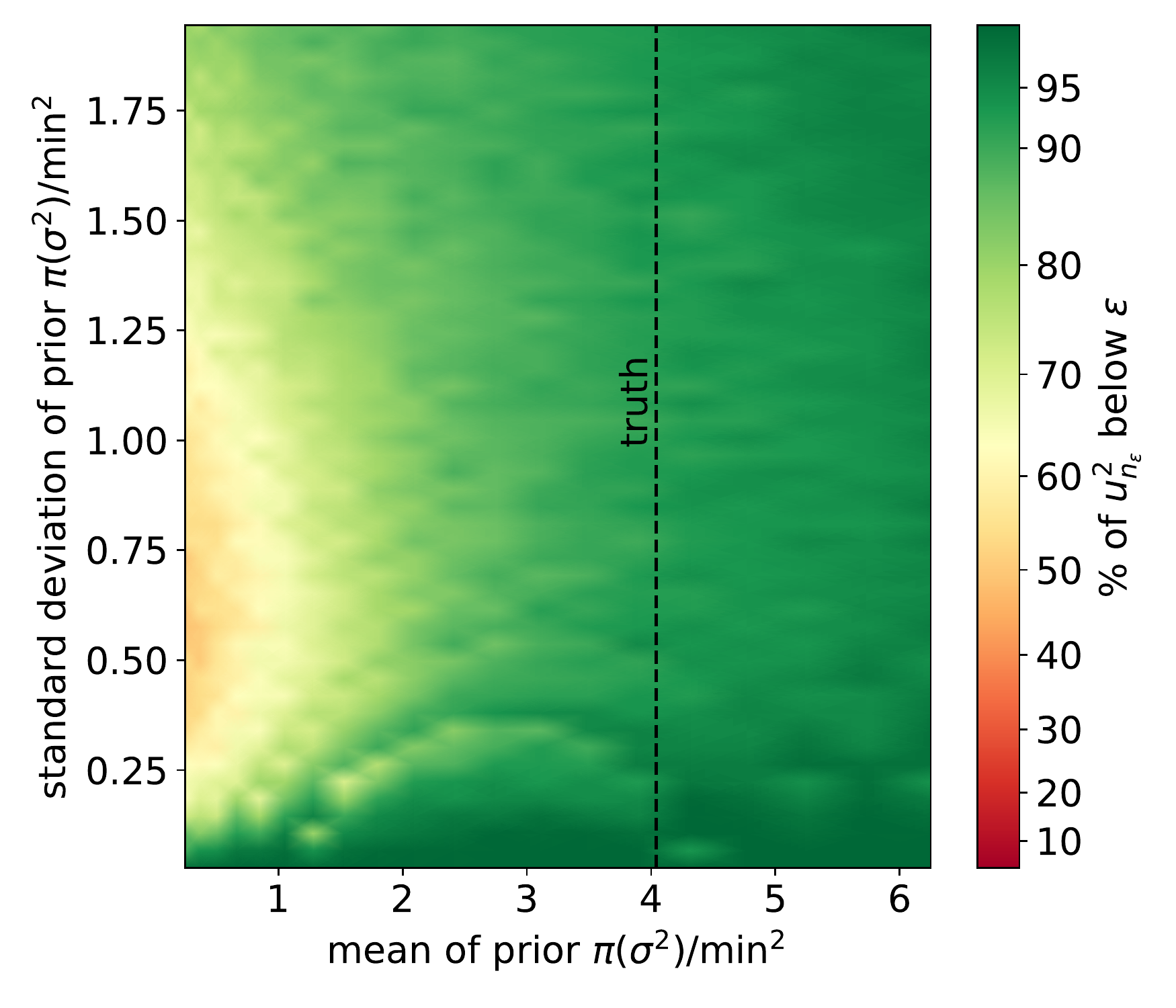}
		\caption{$\eps=30\,\mathrm{sec}$}
	\end{subfigure}%
~
	\begin{subfigure}[t]{0.40\textwidth}
		\includegraphics[width=\textwidth]{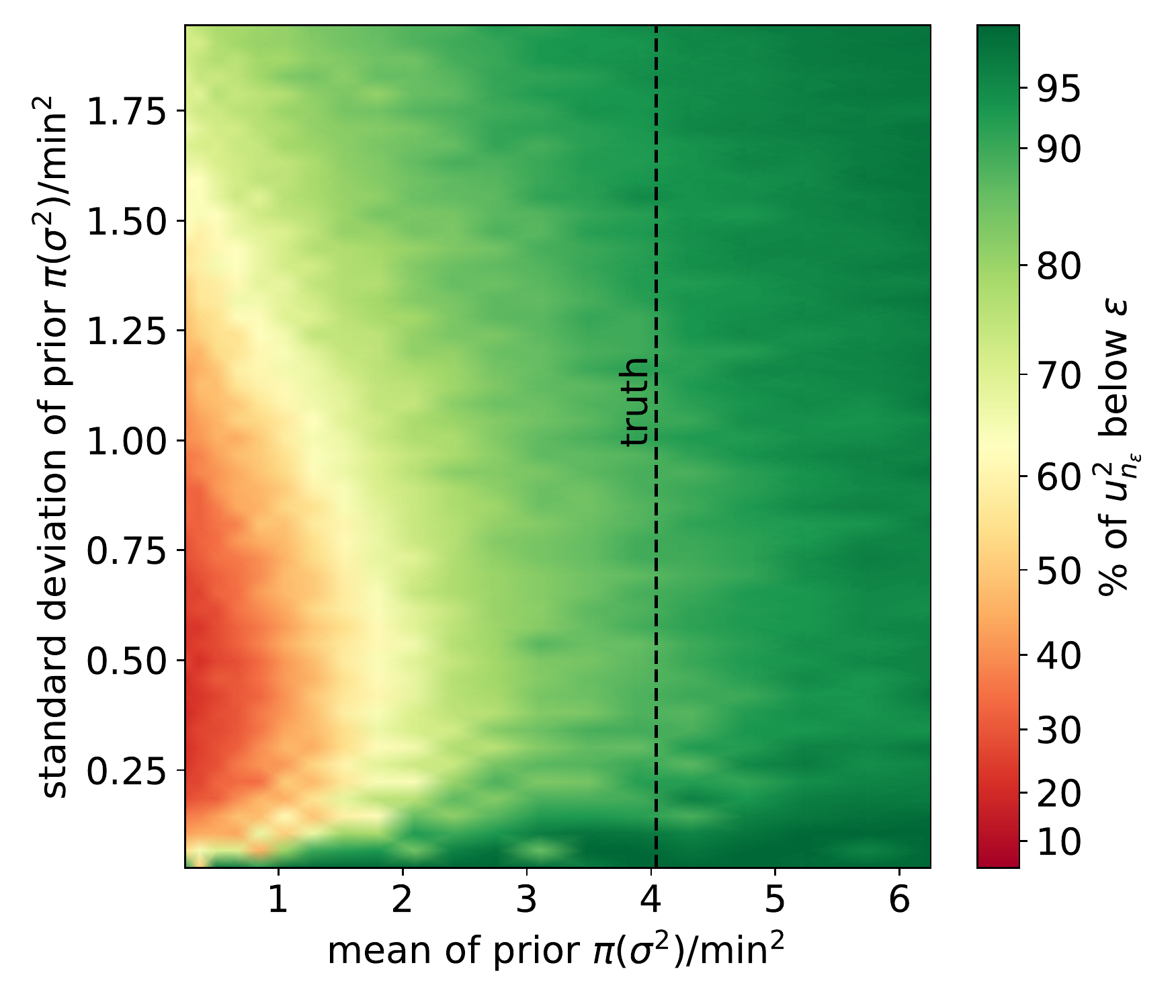}
		\caption{$\eps=20\,\mathrm{sec}$}
	\end{subfigure}
	\begin{subfigure}[t]{0.40\textwidth}
		\includegraphics[width=\textwidth]{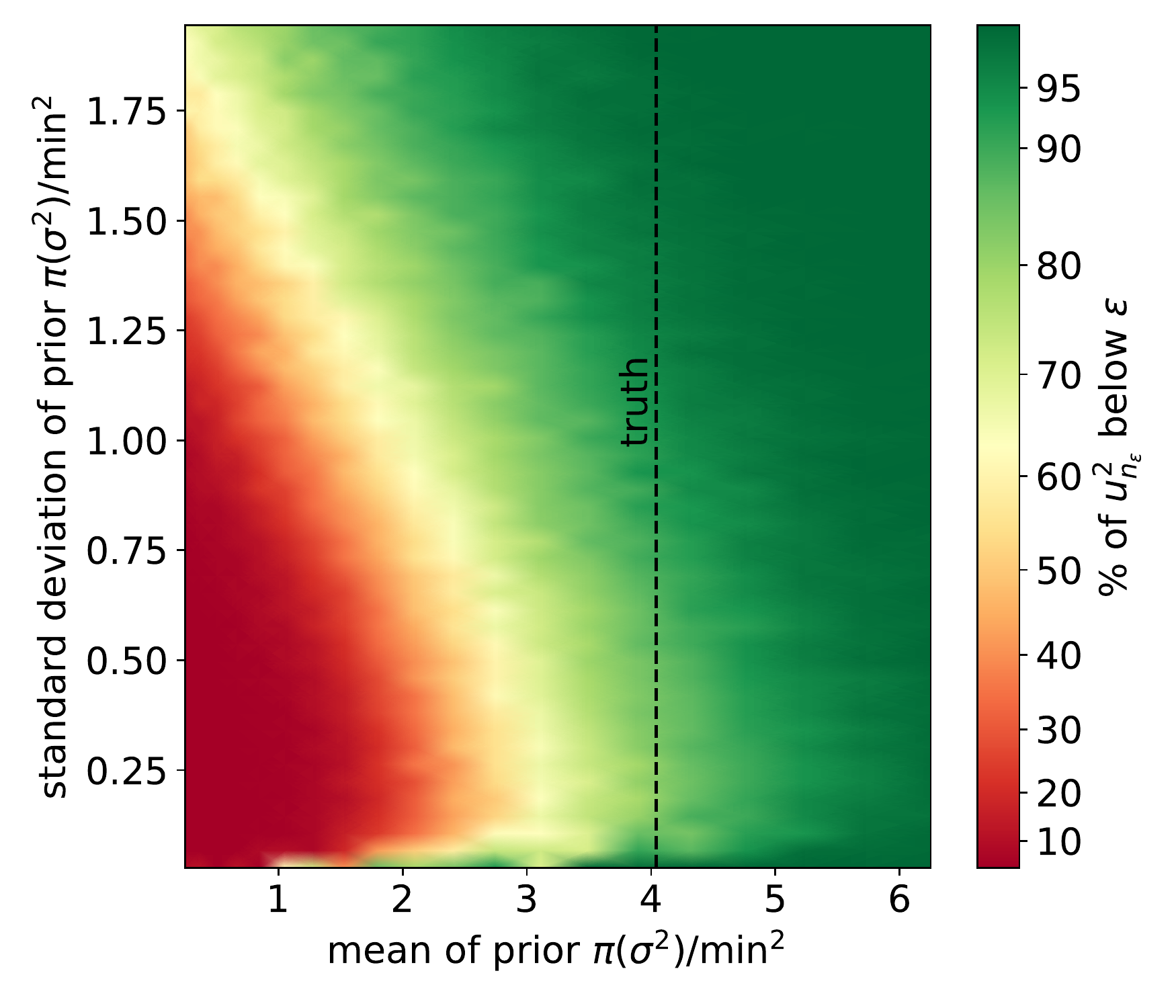}
		\caption{$\eps=10\,\mathrm{sec}$}
	\end{subfigure}%
~
	\begin{subfigure}[t]{0.40\textwidth}
		\includegraphics[width=\textwidth]{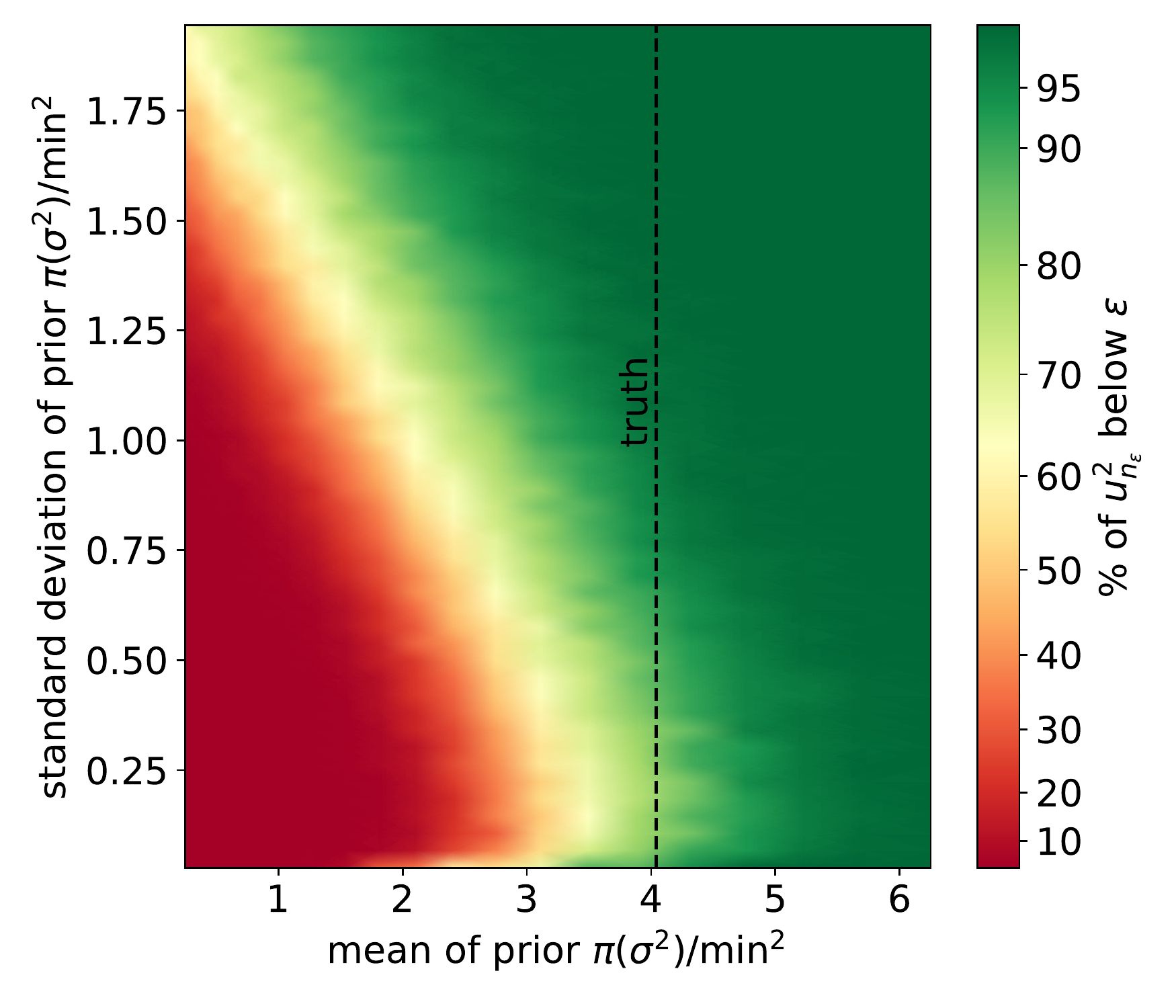}
		\caption{$\eps=7\,\mathrm{sec}$}
	\end{subfigure}
	\begin{subfigure}[t]{0.40\textwidth}
		\includegraphics[width=\textwidth]{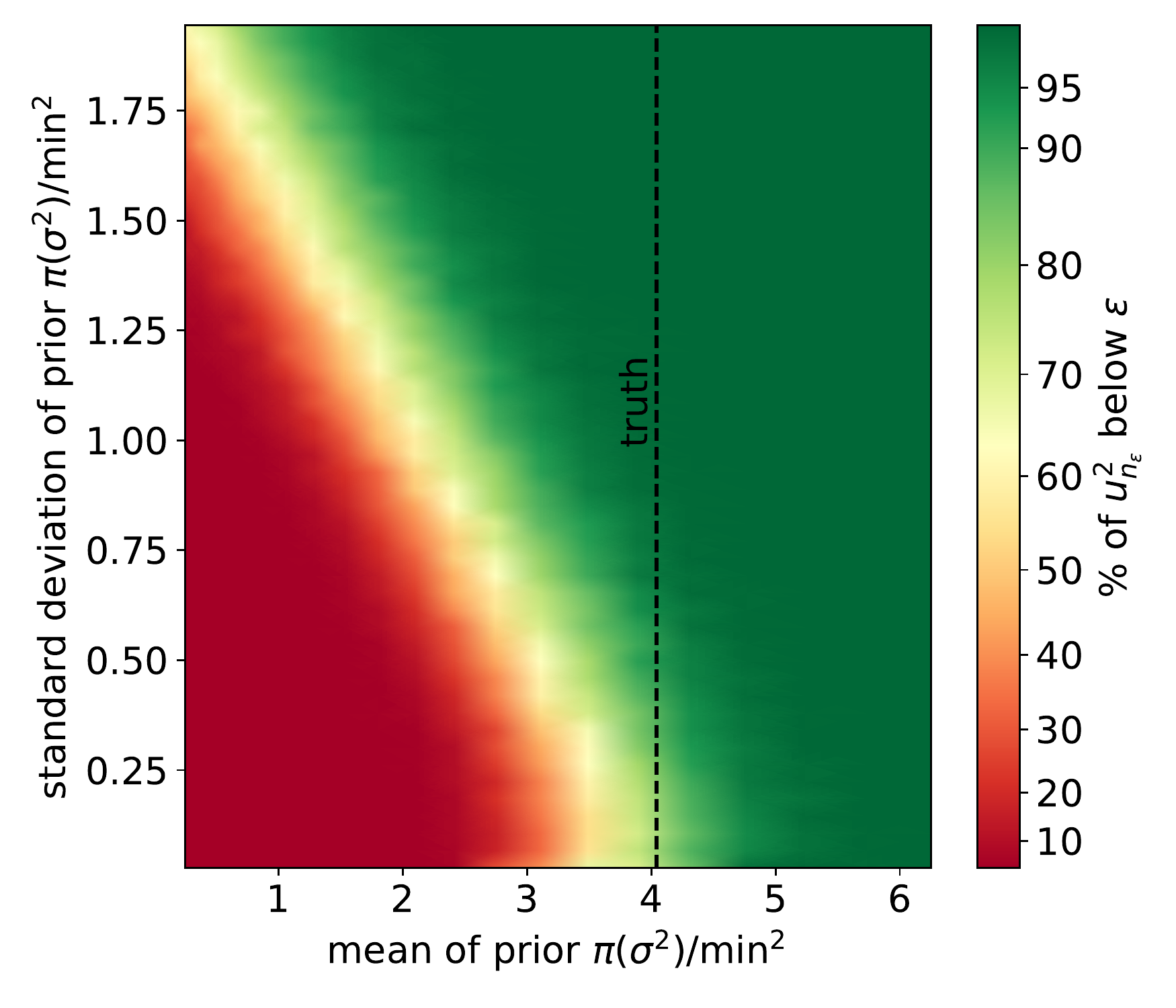}
		\caption{$\eps=5\,\mathrm{sec}$}
	\end{subfigure}%
~
	\begin{subfigure}[t]{0.40\textwidth}
		\includegraphics[width=\textwidth]{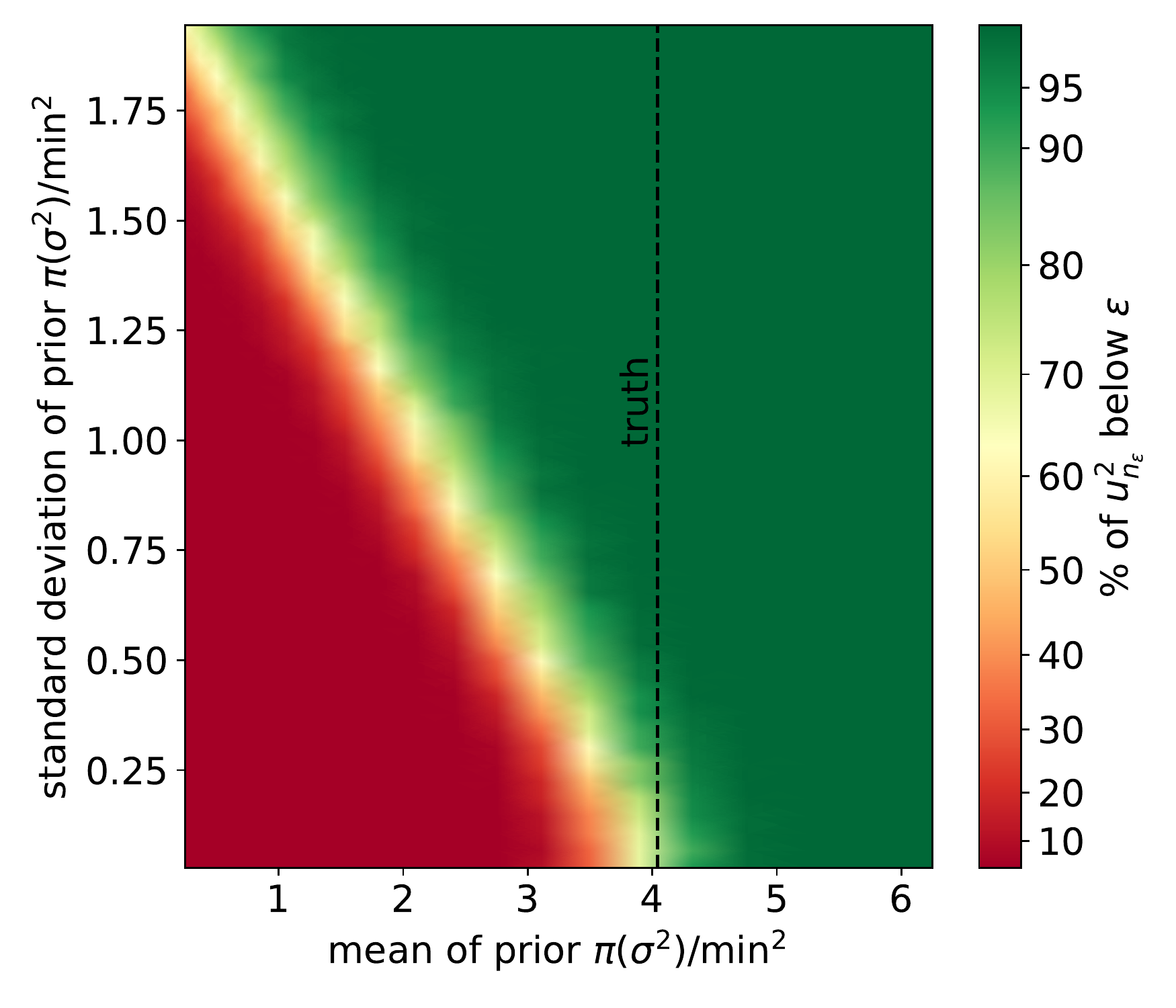}
		\caption{$\eps=3\,\mathrm{sec}$}
	\end{subfigure}
	\label{fig:Music_acc_eps}
	\caption{Percentage of $u_{n_\eps}^2 < \eps^2$ for the setup and data from Section \ref{subsec:normal} for various $\eps$ (in decreasing order), $k=2$,  various marginals $\pi(\sigma^2)$ and the same fixed marginal $\pi(\mu)$ as in Figure \ref{subfig:Music_acc_s2}. The ``true'' value of $\sigma^2$ (computed from the dataset) is depicted by the dashed lines. We used a smaller resolution compared to Figure \ref{subfig:Music_acc_s2} for numerical reasons.}
	\label{fig:phase_transition}
\end{figure}
\end{document}